\newcommand{\pEMD}{{\mathbb{EMD}}}
\newcommand{\cEMD}{{\mbox{EMD}_s}}
\newcommand{\bbM}{{\mathbb{M}}}
\newcommand{\bbD}{{\mathbb{D}}}
\newcommand{\bbP}{{\mathbb{P}}}
\newcommand{\bbR}{{\mathbb{R}}}
\newcommand{\bbN}{{\mathbb{N}}}
\newcommand{\V}{\mathbb{R}^n}
\newcommand{\mcE}{{\mathcal{E}}}
\newcommand{\mcJ}{{\mathcal{J}}}
\newcommand{\mcP}{{\mathcal{P}}}
\newcommand{\mcC}{{\mathcal{C}}}
\newcommand{\mcA}{{\mathcal{A}}}
\newcommand{\mcI}{{\mathcal{I}}}
\newcommand{\mcR}{{\mathcal{R}}}
\newcommand{\mcM}{{\mathcal{M}}}
\newcommand{\mcN}{{\mathcal{N}}}
\newcommand{\la}{\lambda}
\newtheorem{thm}{Theorem} 
\newtheorem{prop}[thm]{Proposition}
\newtheorem{conj}{Conjecture}
\theoremstyle{remark}
\numberwithin{equation}{section}
\begin{document}

\title{Expected value of the one-dimensional \\ Earth Mover's Distance}

\author[R.~Bourn]{Rebecca Bourn}
\address[Rebecca Bourn]{Department of Mathematical Sciences\\
         University of Wisconsin - Milwaukee\\
         3200 North Cramer Street \\
         Milwaukee, WI 53211}
\email[R.~Bourn]{bourn@uwm.edu}

\author[J.~F.~Willenbring]{Jeb F. Willenbring}
\address[Jeb F.~Willenbring.]{Department of Mathematical Sciences\\
         University of Wisconsin - Milwaukee\\
         3200 North Cramer Street \\
         Milwaukee, WI 53211}
\email[J.~F.~Willenbring]{jw@uwm.edu}

\date{\today}


\subjclass[2010]{Primary 05E40; Secondary 62H30}

\begin{abstract}
From a combinatorial point of view, we consider the Earth Mover's Distance (EMD) associated with a metric measure space.  The specific case considered is deceptively simple:  Let the finite set of integers $[n] = \{1,\cdots,n\}$ be regarded as a metric space by restricting the usual Euclidean distance on the real numbers.  The EMD is defined on ordered pairs of probability distributions on $[n]$.  We provide an easy method to compute a generating function encoding the values of EMD in its coefficients, which is related to the Segr{\'e} embedding from projective algebraic geometry.  As an application we use the generating function to compute the expected value of EMD in this one-dimensional case.  The EMD is then used in clustering analysis for a specific data set.
\end{abstract}

\maketitle

\section{Introduction}\label{sec:Introduction}

Fix a positive integer $n$.  We will denote the finite set of integers $\{1,\cdots, n\}$ by $[n]$.
By a \emph{probability measure} on $[n]$ we mean, as usual, a non-negative real-valued function $f$ on the set $[n]$ such that $f(1) + \cdots + f(n) = 1$.  By the \emph{probability simplex} on $[n]$ we mean the set of all probability measures on $[n]$, denoted $\mcP_n$.  We view $\mcP_n$ as embedded in $\bbR^n$.  Given $\mu, \nu \in \mcP_n$ define the set of joint distribution
\[
    \mcJ_{\mu \nu} = \left\{ J \in \bbR^{n \times n} :
        \begin{array}{l}
            J \mbox{ is a non-negative real number $n$ by $n$ matrix such that } \\
            \sum_{i=1}^n J_{ij} = \mu_j \mbox{ for all } j  \mbox{ and }
            \sum_{j=1}^n J_{ij} = \nu_i \mbox{ for all } i
        \end{array}
    \right\}.
\]
For results concerning the geometry of $\mcJ_{\mu\nu}$ see \cite{DK14} and \cite{PSU19} where they are referred to as
\emph{transportation polytopes} and \emph{discrete copulas} respectively.

The \emph{Earth Mover's Distance} is defined as
\[
    \pEMD(\mu, \nu) = \inf_{J \in \mcJ_{\mu \nu}} \sum_{i,j = 1}^n |i-j| J_{ij}.
\]
We remark that the set $\mcP_n \times \mcP_n$ is a compact subset of $\bbR^{2n}$ and so by continuity the infinum is actually a minimum value.  Also, the ``EMD'' is sometimes referred to by other names, for example,
in a two-dimensional setting it is called the \emph{image distance}.
More generally it is called the Wasserstein metric (see \cite{Mgw}).

We recall that the  set of all finite distributions, $\mcP_n$, embeds as a compact polyhedron on a hyperplane in $\bbR^n$ and inherits Lebesgue measure and has finite volume.  We normalize this measure so that the total mass of $\mcP_n$ is one.  We then obtain a probability measure on $\mcP_n$, which is uniform.  Similarly, $\mcP_n \times \mcP_n$ may be embedded in $\bbR^n \times \bbR^n$ and can be given the (uniform) product probability measure.

From its definition, the function $\pEMD$ is a metric on $\mcP_n$.  The subject of this paper concerns the expected value of $\pEMD$ with respect to the uniform probability measure.
In this light, we define a function $\mcM$ on ordered pairs of non-negative integers, $(p,q)$, as
\begin{equation} \label{eqn:mpq}
    \mcM_{p,q} = \frac{(p-1)\mcM_{p-1,q} + (q-1)\mcM_{p,q-1} + |p-q|}{p+q-1}
\end{equation}
with $\mcM_{p,q} = 0$ if either $p$ or $q$ is not positive.    Let $\mcM_n = \mcM_{n,n}$ for any non-negative integer $n$.

We will prove the following theorem in Section \ref{sec:Proofs of the Theorems}.

\begin{thm} \label{thm:main}
    Fix a positive integer $n$.  Let $\mcP_n \times \mcP_n$ be given the uniform probability measure defined by Lebesgue measure from the embedding into $\bbR^{2n}$.  The expected value of $\pEMD$ on $\mcP_n \times \mcP_n$ is $\mcM_n$.
\end{thm}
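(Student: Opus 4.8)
The plan is to reduce the expected value to a purely combinatorial average over lattice paths, and then to recognize the recursion \eqref{eqn:mpq} as the law of total expectation for that average. First I would use the classical fact that in one dimension the Earth Mover's Distance equals the $L^1$-distance between cumulative distribution functions: writing $C_k = \sum_{i=1}^k(\mu_i - \nu_i)$, one has $\pEMD(\mu,\nu) = \sum_{k=1}^{n-1}|C_k|$. To expose the probabilistic structure I would then sample $\mu$ and $\nu$ from the uniform measure on $\mcP_n$ by the standard ``stick-breaking'' model: drop $n-1$ independent uniform points in $[0,1]$ for each of $\mu$ and $\nu$; the induced gaps are uniform on $\mcP_n$, and the $k$-th partial sum $\sum_{i\le k}\mu_i$ is exactly the $k$-th order statistic of the $\mu$-points. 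Letting $A(t)$ and $B(t)$ count the $\mu$-points and $\nu$-points in $[0,t]$, an elementary identity gives $|C_k| = \int_0^1 \mathbf{1}[t \text{ lies between the two } k\text{-th order statistics}]\,dt$; summing over $k$ and noting that the number of such $k$ is precisely $|A(t)-B(t)|$ yields the clean representation
\[
    \pEMD(\mu,\nu) = \int_0^1 |A(t) - B(t)|\,dt .
\]

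Next I would take the expectation. The $2(n-1)$ dropped points are i.i.d.\ uniform, so their spacings are exchangeable and---crucially---independent of the sequence of types ($\mu$ or $\nu$) read off in increasing order. Reading the types in order produces a uniformly random lattice path with $n-1$ up-steps and $n-1$ down-steps; if $W_j$ denotes the height (number of $\mu$-points minus number of $\nu$-points) after the $j$-th point, then $|A(t)-B(t)| = |W_j|$ on the $j$-th gap. Since each of the $2n-1$ gaps has expected length $1/(2n-1)$ and is independent of the path, this gives $E[\pEMD] = \frac{1}{2n-1}\,E\bigl[\sum_{j=0}^{2n-2}|W_j|\bigr]$. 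Equivalently, introducing a position $J$ chosen uniformly from $\{0,\dots,2n-2\}$ independently of the path, $E[\pEMD] = E|W_J|$.

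This suggests defining, for positive integers $p,q$, the quantity $\mcM_{p,q} := E|W_J|$ where now $W$ is a uniformly random path with $p-1$ up-steps and $q-1$ down-steps and $J$ is uniform on the $p+q-1$ positions $\{0,\dots,p+q-2\}$; then $E[\pEMD] = \mcM_{n,n}$ by construction. The heart of the argument is to show that this $\mcM_{p,q}$ obeys \eqref{eqn:mpq}, which I would do by conditioning on the terminal position and on the last step. With probability $1/(p+q-1)$ one has $J = p+q-2$, the full path, where $W_J = (p-1)-(q-1) = p-q$ contributes $|p-q|$; this produces the final summand. Otherwise $J$ is uniform on the first $p+q-2$ positions, so $W_J$ depends only on the first $p+q-3$ steps, and conditioning on whether the (independent) last step is up or down---with probabilities $(p-1)/(p+q-2)$ and $(q-1)/(p+q-2)$---leaves a uniform path with one fewer up-step or down-step, contributing $\mcM_{p-1,q}$ and $\mcM_{p,q-1}$ respectively. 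Assembling these by the law of total expectation gives exactly \eqref{eqn:mpq}, and the convention $\mcM_{p,q}=0$ when $p$ or $q$ is non-positive matches the degenerate (empty) paths. Since the recursion together with these boundary values determines $\mcM_{p,q}$ uniquely, the theorem follows.

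The main obstacle I anticipate is justifying the two independence statements that make the combinatorial reduction exact: that the stick-breaking partial sums are the order statistics (so that $\pEMD$ becomes $\int_0^1|A-B|\,dt$), and, more delicately, that the vector of spacings is independent of the type-arrangement. The latter is what decouples the geometry (gap lengths, contributing the factor $1/(2n-1)$) from the combinatorics (the random $\pm1$ path), and it is the step that must be argued with care rather than assumed.
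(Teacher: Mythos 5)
Your proposal is correct, and it is a genuinely different proof from the one in the paper. The paper's route is combinatorial and algebraic: it works with the discrete $\cEMD$ on pairs of compositions of $s$, builds the generating function $H_{p,q}(z,t)$ (a deformation of the Hilbert series of the Segr\'e embedding, with the recursion of Theorem \ref{thm:HS} proved via chain-supported joint matrices and an RSK-type bijection), differentiates at $z=1$ to get the polynomials $N_{p,q}(t)$, extracts the coefficient asymptotics of $N_{p,q}(t)/(1-t)^{p+q}$ as $s\to\infty$, and finally observes that the recursion for $N_{p,q}(1)$, after dividing by $\tbinom{s+p-1}{p-1}\tbinom{s+q-1}{q-1}$ and by $s$, becomes exactly \eqref{eqn:mpq}. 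Your argument stays entirely in the continuum: the stick-breaking representation of the uniform measure on $\mcP_n$, the crossing identity $\pEMD(\mu,\nu)=\int_0^1|A(t)-B(t)|\,dt$, the independence of the spacings from the interleaving pattern, and a last-step/last-position conditioning on uniform lattice paths that produces \eqref{eqn:mpq} as a literal law of total expectation. (Your two flagged independence facts are both standard and true: spacings of i.i.d.\ uniforms are Dirichlet$(1,\dots,1)$, i.e.\ uniform on the simplex with partial sums equal to order statistics, and for i.i.d.\ continuous variables the rank pattern is uniform and independent of the order statistics.) What each approach buys: yours is shorter, self-contained, avoids the discrete-to-continuous limit entirely, and explains conceptually why \eqref{eqn:mpq} has the shape of a conditional-expectation recursion, something that is opaque in the paper's derivation; the paper's machinery, by contrast, yields far more than the mean --- the full distribution of $\cEMD$ for every finite $s$ (used for the $s=30$, $n=5$ grade example), the connection to determinantal varieties, and the palindromicity/unimodality data on $N_{n,n}(t)$. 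One bookkeeping remark: your opening step, the $L^1$/CDF formula for one-dimensional EMD, is not assumed in the paper but is its Theorem \ref{thm:L1}; citing it as classical is fine, but in a self-contained write-up you would either prove it or cite that theorem explicitly.
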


From a theoretical point of view, this paper concerns the expected value of $\pEMD$.  Additionally, we consider a discrete version of the EMD and compute the mean.  In turn, we discuss the relationship to cluster analysis.  Then, we end with a comparison of the theoretical results to grade distributions where we have noticed persistent clustering.

The above theorem is obtained as a limit of a discrete version of $\pEMD$ (denoted by $\cEMD$, for non-negative integer $s$) which is described using a generating function.
The generating function is a deformation of the Hilbert series of the
Segr{\'e} embedding.  Some standard tools from algebraic combinatorics show up in a new way in the proofs.

From a practical point of view, we will also consider a ``real world'' data set with a finite number of joint probability measures derived from letter grade distributions.  Specifically, we consider a network of grade distributions from the University of Wisconsin - Milwaukee campus, where two nodes are joined when the $\cEMD$ between them falls below a pre-specified distance threshold. Determining this threshold so that data features are revealed is a subject of research.   The expected value of the EMD in the uniformly random situation helps guide this choice.

The family of networks obtained by varying the distance threshold will be used for metric hierarchical clustering.  When the threshold is set so that the network is connected, the spectrum of the corresponding Laplacian matrix (see \cite{FC}) will be computed, as it also relates to clustering.  Our use of the EMD in this context should be viewed as an attempt at exploratory data analysis to identify the ``communities'' in this network rather than rigorous hypothesis testing.

\bigskip
\noindent {\bf Acknowledgments:}  The second author would like to thank Anthony Gamst for conversation concerning cluster analysis (and especially the reference \cite{K}), and Ryan (Skip) Garibaldi for pointing out the relationship between the Earth Mover's Distance and comparing grade distributions.

We would also like to thank the referees for making substantial improvements to the mathematical content and exposition of this article.

\bigskip

\section{Non-technical preliminaries}\label{sec:Non-tech Preliminaries }

In this section we consider some specific examples of finite distributions.  A motivating situation comes from grade distributions, which in the United States are often considered with five outcomes: A, B, C, D, and F.  The standard Grade Point Average (GPA) assigns 4.0 to A, 3.0 to B, 2.0 to C, 1.0 to D, and 0.0 to F.  The relative distances between these five grades is computed by the absolute value of the difference of point values.  That is, a B grade is three units away from an F, while one unit away from an A.

Suppose we are given three distributions in the five grade setting for classes with $s=30$ students.  That is,
\[
    \begin{array}{c|ccccc}
          &  A &  B &  C &  D &  F \\ \hline
        X &  0 & 19 &  8 &  2 &  1 \\
        Y & 12 &  2 &  5 & 11 &  0 \\
        Z &  2 & 20 &  2 &  3 &  3
    \end{array}
\]
To compare distribution X to distribution Y, one notices that if the 12 A grades in Y were moved down to B, 5 C grades moved up to B, 8 D grades moved up to C, and one D grade moved down to F, then the distributions would be identical.  The matrix
\[
\left[
\begin{array}{ccccc}
 0 & 0 & 0 & 0 & 0 \\
 12 & 2 & 5 & 0 & 0 \\
 0 & 0 & 0 & 8 & 0 \\
 0 & 0 & 0 & 2 & 0 \\
 0 & 0 & 0 & 1 & 0 \\
\end{array}
\right]
\]
encodes the ``conversion''.  That is, if the rows and columns correspond to the grades (A,B,C,D,F) then the entry in row i and column j records how many grades to move from position i in Y to position j in X.  The entries on the diagonal reflect no ``Earth'' movement, while the entries in the first sub and super diagonals reflect one unit of movement.  The row sums return the X distribution, while the column sums return the Y distribution.  In total, the value of $\cEMD$ is 26.

The Y and Z distributions compare as follows: move 10 B's up one unit to a grade of A, to reflect the fact that Y had 12 grades of A.   We move 5 grades down from B to C, and 3 grades from B to D.  This latter change is noted as a jump across two positions which will ``cost'' 2 units in EMD, and since there are 3 grades to move, this makes an overall contribution of 6.  Finally, 2 C grades are moved down to D, and the 3 F grades in the Z distribution would be moved to D in the distribution Y.

The joint distribution matrix for Y and Z  is
\[
\left[
\begin{array}{ccccc}
 2 & 10 & 0 & 0 & 0 \\
 0 & 2 & 0 & 0 & 0 \\
 0 & 5 & 0 & 0 & 0 \\
 0 & 3 & 2 & 3 & 3 \\
 0 & 0 & 0 & 0 & 0 \\
\end{array}
\right].
\]
Interestingly, the total EMD is again 26.

Finally, the X and Z distributions are compared.  The joint matrix is
\[
\left[
\begin{array}{ccccc}
 0 & 0 & 0 & 0 & 0 \\
 2 & 17 & 0 & 0 & 0 \\
 0 & 3 & 2 & 3 & 0 \\
 0 & 0 & 0 & 0 & 2 \\
 0 & 0 & 0 & 0 & 1 \\
\end{array}
\right].
\]
The total EMD is 10.

The three distributions above have the same GPA of 2.5.  We note that this is a feature, which we point out to give indication that the EMD will clearly distinguish between distributions even if the GPA is constant.

To help the reader gain some intuitive feel for the EMD we augment the three distributions by:
\[
    \begin{array}{c|ccccc}
        & A &  B & C &  D & F \\ \hline
U& 13& 13& 0& 0& 4 \\
V& 9& 1& 13& 2& 5 \\
W& 9& 7& 8& 6& 0
\end{array}
\]

As an exercise, one can compute the 36 pairwise distances between each of the six provided distributions.  We computed them with Mathematica as shown below:
\[
\begin{array}{c|cccccc}
 \text{EMD} & \text{U} & \text{V} & \text{W} & \text{X} & \text{Y} & \text{Z} \\ \hline
 \text{U} & 0 & 24 & 20 & 24 & 24 & 18 \\
 \text{V} & 24 & 0 & 12 & 26 & 16 & 22 \\
 \text{W} & 20 & 12 & 0 & 16 & 10 & 16 \\
 \text{X} & 24 & 26 & 16 & 0 & 26 & 10 \\
 \text{Y} & 24 & 16 & 10 & 26 & 0 & 26 \\
 \text{Z} & 18 & 22 & 16 & 10 & 26 & 0 \\
\end{array}
\]

One can sample distributions of five grades with 30 students in many distinct ways.   For each sampling method one can ask how the EMD is distributed.  The sampling method could be chosen to accurately simulate synthetic data to match previously observed samples from a particular subject at a particular institution.   Or, a prior distribution on grade distributions could be assumed, such as a discretization of the multivariate normal distribution.

Upon exploration of observed data one notices clear clustering of the distributions relative to the EMD.  Indeed, if some distributions are encountered more frequently than others in a particular model then clustering should be expected.   With this fact in mind one is led to question of sampling distributions at flat random.
That is, sampling independently with each distribution being equally likely.  The theoretical behavior of the uniform model can then be compared to observed data.  Clustering in the uniform model can be considered ``random'', while additional observed clustering in a specific data set is likely related to a causal feature.   Statistics describing clustering should be understood for the uniform distribution as it has maximal entropy.

For any given distribution, one seeks a theoretical understanding of any given descriptive statistic.   The present article restricts the focus to the mean of EMD.  Other statistics will be considered in future work.  Moreover, we focus on the uniform distribution only over the space of finite probability distributions.

Finally, we note that the results of this article imply that the mean discrete EMD on 30 student, five grade distributions is slightly larger than 26.  The maximum EMD is 120 reflecting the fact that the distance between all 30 students with A grades is 120 units away from the distribution with all 30 students with F grade.   Such large values of EMD are unlikely.  The distribution of actual grade data, as we expect, is skewed to the right (i.e. mean larger than median).

\section{Technical Preliminaries}\label{sec:Preliminaries}

In this section we  recall some basic notation from combinatorics and linear algebra that are used throughout the paper.

\subsection{Notation from linear algebra}\label{subsec:Notation}

Let  $\bbM_{n,m}$ be the vector space of real matrices with $n$ rows and $m$ columns.  Throughout, we assume that the field of scalars is the real numbers, $\mathbb{R}$.  For $i$ and $j$ with $1 \leq i \leq n$, $1 \leq j \leq m$ we let $e_{i,j}$ denote the $n$ by $m$ matrix with 1 in the $i$-th row, $j$-th column, and 0 elsewhere.  A matrix, $M \in \bbM_{n,m}$ is written as $M = (M_{i,j})$ where $M_{i,j}$ is the entry in the $i$-th row and $j$-th column.  So, $M = \sum M_{i,j} \, e_{i,j}$.  We assume standard notation for the algebra of matrices.  For example, the standard inner product of $X,Y \in \bbM_{n,m}$ is
\[
    \langle X, Y \rangle = \mbox{Trace}(X^T Y).
\]

In the case that $m=1$ we write $e_i = e_{i,1}$ for $1 \leq i \leq n$.  As usual, Let $\V$ denote the $n$-dimensional real vector space consisting of column vectors of length $n$.  The set $\{e_1, \cdots, e_n \}$ is a basis for $\V$.  For our purposes a very useful alternative basis is given by
\[
    \omega_j = \sum_{i=1}^j e_i
\]
where $1 \leq j \leq n$.    We call the set of $\omega_j$ the \emph{fundamental basis for $\V$}.  The terminology here comes from the the root system of type A in Lie theory (see \cite{GW}).

Let the orthogonal complement, $\omega_n^\perp$, to $\omega_n$ be denoted by
\[
    \V_0 = \left\{ v \in \V \mid \langle v, \omega_n \rangle = 0 \right\}.
\]
Column vectors in $\V_0$ have coordinates that sum to zero.  We let $\pi_0$ denote the orthogonal projection from $\V$ onto $\V_0$,
\[
    \begin{array}{cccc}
        \pi_0 : & \V & \rightarrow & \V_0 \\
                & v  & \mapsto     & \pi_0(v)
    \end{array}
\] defined by the formula
\[
    \pi_0(v) = v - \frac{\langle v, \omega_n \rangle}{n} \omega_n.
\]
Note that the image of $\pi_0$ is $\V_0$, and the kernel contains $\omega_n$.  For $1 \leq j \leq n-1$, let
$\tilde \omega_j = \pi_0(\omega_j)$.  Observe that $\tilde \omega_1, \cdots, \tilde \omega_{n-1}$ span $\V_0$, and by considering dimension, are a basis for $\V_0$.  We will call this set the \emph{fundamental basis for $\V_0$}.

The subspace $\V_0$ has another basis that is of importance to us:
\[
    \Pi = \{ \alpha_1, \cdots, \alpha_{n-1} \}
\] where $\alpha_j = e_j - e_{j+1}$.  We refer to $\Pi$ as the \emph{simple basis for $\V_0$}.  An essential point is that $\Pi$ is dual to the fundamental basis.  That is $\langle \alpha_i, \tilde \omega_j \rangle = \delta_{i,j}$ where
\[
    \delta_{i,j} = \left\{
        \begin{array}{ll}
            1 & \mbox{if $i=j$} \\
            0 & \mbox{otherwise}
        \end{array}
    \right.
\]

Let $\mcE: \V \rightarrow \bbR$ be defined as
\[
    \mcE(v) = |v_1| + |v_1 + v_2| + |v_1 + v_2 + v_3| + \cdots + |v_1 + \cdots + v_n|
\]
for $v = \sum v_j e_j \in \V$.

Observe that since $\V_0 \subset \V$, $\mcE$ is defined on $\V_0$ by restriction.

Let $v = \sum_{i=1}^{n-1} c_i \alpha_i \in \V_0$.  Then, $\mcE(v) = \sum_{i=1}^{n-1} |c_i|$.  This fact is easily seen since the fundamental basis is dual to the simple basis relative to the standard inner product, and
\[
    \langle v, \omega_j \rangle = v_1 + \cdots + v_j
\]
for $1 \leq j \leq n$.

In Section \ref{sec:Proofs of the Theorems} we will prove that

\begin{thm}\label{thm:L1} For all $\mu, \nu \in \mcP_n$,
\[
\pEMD(\mu,\nu) = \mcE(\mu-\nu).
\]
\end{thm}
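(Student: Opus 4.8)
The plan is to recognize Theorem~\ref{thm:L1} as the discrete, finite analogue of the classical identity equating the Wasserstein-$1$ distance on the line with the $L^1$ distance between cumulative distribution functions. Writing $v = \mu - \nu$ and introducing the partial sums $F_k = \mu_1 + \cdots + \mu_k$ and $G_k = \nu_1 + \cdots + \nu_k$, we have $v_1 + \cdots + v_k = F_k - G_k$, so that
\[
    \mcE(\mu - \nu) = \sum_{k=1}^{n} |F_k - G_k| = \sum_{k=1}^{n-1} |F_k - G_k|,
\]
the last term vanishing because $F_n = G_n = 1$. Thus the goal is to show that the minimal transport cost $\pEMD(\mu,\nu)$ equals $\sum_{k=1}^{n-1}|F_k - G_k|$, which I would do by establishing the two matching inequalities.

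For the lower bound $\pEMD(\mu,\nu) \ge \mcE(\mu-\nu)$, the key observation is the ``cut-counting'' identity $|i-j| = \sum_{k=1}^{n-1} c_k(i,j)$, where $c_k(i,j) = 1$ when exactly one of $i,j$ is at most $k$ and $c_k(i,j)=0$ otherwise; indeed $|i-j|$ counts the integers $k$ with $\min(i,j)\le k < \max(i,j)$. For any $J \in \mcJ_{\mu\nu}$, interchanging the order of summation gives
\[
    \sum_{i,j=1}^{n} |i-j|\, J_{ij} = \sum_{k=1}^{n-1} \Bigl( \sum_{i \le k < j} J_{ij} + \sum_{j \le k < i} J_{ij} \Bigr).
\]
Using the marginal constraints $\sum_i J_{ij} = \mu_j$ and $\sum_j J_{ij} = \nu_i$, a short computation identifies the signed flow across the cut at $k$,
\[
    F_k - G_k = \sum_{i>k,\, j\le k} J_{ij} \;-\; \sum_{i\le k,\, j>k} J_{ij},
\]
so that $|F_k - G_k|$ is bounded by the sum of the two nonnegative quantities on the right via the triangle inequality. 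Summing over $k$ yields $\sum_{i,j}|i-j|\,J_{ij} \ge \sum_{k}|F_k - G_k| = \mcE(\mu-\nu)$ for every $J$, and hence for the minimizer.

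For the reverse inequality I would exhibit a single $J^\star \in \mcJ_{\mu\nu}$ attaining this value, namely the monotone (``north-west corner'') coupling. Concretely, using the right-continuous inverses of the cumulative functions one transports along $t \mapsto (G^{-1}(t), F^{-1}(t))$ for $t$ uniform on $(0,1]$; equivalently, one fills the matrix greedily from the top-left corner, always moving the largest admissible mass while respecting both marginals. The decisive property is monotonicity: no mass is ever moved in crossing directions, so for each cut $k$ exactly one of the two inner sums displayed above is zero. Consequently the triangle inequality used in the lower bound becomes an equality term by term, and the total cost of $J^\star$ is exactly $\sum_{k}|F_k - G_k|$.

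I expect the main work to lie in the upper bound: one must verify both that the monotone coupling genuinely has marginals $\mu$ and $\nu$ (a bookkeeping check on the greedy construction) and that it carries one-directional, non-crossing flow across every cut. An alternative that sidesteps the explicit construction is Kantorovich--Rubinstein duality: the lower-bound argument already points to optimal dual potentials assembled from the signs of the quantities $F_k - G_k$, and exhibiting such potentials proves tightness without constructing $J^\star$. Either way, once both inequalities are in hand the theorem follows, since the infimum defining $\pEMD$ is attained.
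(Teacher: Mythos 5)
Your proof is correct, and it takes a genuinely different route from the paper's. You work directly on $\mcP_n$ with the classical Wasserstein--CDF argument: the cut decomposition $|i-j|=\sum_{k=1}^{n-1}c_k(i,j)$ gives the lower bound $\langle J,C\rangle\geq\sum_k|F_k-G_k|=\mcE(\mu-\nu)$ for \emph{every} coupling, and the monotone (northwest-corner) coupling attains it because its chain support forces the flow across each cut to go in only one direction. The paper instead restricts to rational distributions $(a_1/s,\ldots,a_n/s)$, which are dense in $\mcP_n$ (invoking continuity), and runs a double induction --- on the number of parts $p+q$ and on $s$ --- peeling off zero rows/columns and the entry $J_{11}$, then using Proposition \ref{prop:poset} to reduce to couplings with chain support, for which the cost is matched to $\mcE$ by a telescoping ``upon inspection'' step. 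The two arguments share the same kernel: the paper's chain-support reduction is exactly your monotonicity observation. What yours buys is economy and rigor --- no density step, no induction, and your cut identity makes the paper's telescoping inspection precise; what the paper's buys is integration with its combinatorial framework, since the composition-level statement ($\cEMD$ on $\mcC(s,p)\times\mcC(s,q)$, including $p\neq q$) and the chain/RSK apparatus are reused in the proof of Theorem \ref{thm:HS}. A small simplification available to you: rather than constructing $J^\star$ by hand, take any minimizer (the infimum is attained by compactness) and replace it by a chain-supported coupling as in Proposition \ref{prop:poset}; your cut identity then finishes the proof without any bookkeeping about the greedy construction.
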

This allows for a much more explicit combinatorial analysis of EMD.  For situations in which the metric space is not a subset of the real line, the analysis is more difficult.  Indeed, $\pEMD$ is often computed as an optimization problem that minimizes the cost under the constraints imposed by the marginal distribution.  Consequently, the computational complexity is the same as for linear programming.

\subsection{Compositions and related combinatorics}

Let $\bbN$ be the set of non-negative integers.
Given $s \in \bbN$, and a positive integer $n$, define:
\[
    \mcC(s,n) = \{ (a_1, a_2, \cdots, a_n) \in \bbN^n : a_1 + \cdots + a_n = s \}.
\]
An element of the set $\mcC(s,n)$ will be referred to as a \emph{composition of $s$ into $n$ parts}.  (We note that in some places of the literature these are refered to as \emph{weak compositions}, since we allow zero. However, the distinction is not needed for us.)

It is an elementary fact that there are $\binom{s+n-1}{n-1}$ compositions, and therefore for fixed $n$, the number of compositions of $s$ grows as a polynomial function of $s$ with degree $n-1$.  An essential fact for this paper is the asymptotic approximation
\[
    \binom{s+n-1}{n-1} \sim \frac{s^{n-1}}{(n-1)!}.
\]

As in the introduction, given compositions $\mu$ and $\nu$ of $s$ we let $\mcJ_{\mu \nu}$ denote the set of $n$ by $n$ matrices with row sums $\mu$ and column sums $\nu$.  There is a slight difference here in that we are not requiring $\mu$ and $\nu$ to be normalized to sum to one.  In the same light, EMD can be extended as a metric on $\mcC(s,n)$.

We fix an $n$ by $n$ matrix $C$ with $i$-th row and $j$-th column entry to be $|i-j|$.  That is,
\[
C = \left[
    \begin{array}{ccccc}
      0 & 1 & 2 & \cdots & n-1 \\
      1 & 0 & 1 & \cdots & n-2 \\
      2 & 1 & 0 & \cdots & n-3 \\
      \vdots & \vdots & \vdots & \ddots & \vdots \\
      n-1 & n-2 & n-3 & \cdots & 0
    \end{array}
  \right].
\]
So, for $\mu, \nu \in \mcC(s,n)$ and regarding the set $\mcJ_{\mu \nu}$ as non-negative integer matrices with prescribed row and column sums, we arrive at
\[
    \cEMD(\mu, \nu) = \min_{J \in \mcJ_{\mu \nu}} \langle J,C \rangle,
\]
which is a discrete version of EMD.  When we take $s \rightarrow \infty$ we recover the value referred to in the introduction.

The function EMD may be further generalized to the case where $C$ has $p$ rows and $q$ columns, with $i,j$ entry $|i-j|$.  In this case $\mu$ has $p$ components and $\nu$ has $q$ components (each a composition of $s$).  This generalization will be needed in an induction argument in Section \ref{sec:Proofs of the Theorems}.  However, applications need only consider the $p=q$ case.

A further generalization beyond the scope of this paper is to consider more general cost matrices than $C$.  This is equivalent to a variation of the metric.

\section{Generating Functions}\label{ec:Generating Functions}

In algebraic combinatorics it is often useful to record discrete data in a formal (multivariate) power series -- sometimes called a \emph{generating function}.  By ``formal'' we mean that the variables are indeterminates rather than numbers.
In fact, from this point of view one can consider formal power series that only converge at zero, yet encode combinatorial data in their coefficients.   Consequently, convergence is not an issue.
Nonetheless, our series are all geometric series expansions of rational functions, and so will be convergent if, say, all complex variables have modulus less than 1.

Starting from the viewpoint of algebraic combinatorics we define
\[
    H_n(z,t) := \sum_{s=0}^\infty \left( \sum_{(\mu,\nu) \in \mcC(s,n) \times \mcC(s,n)}
                z^{\tiny \cEMD(\mu,\nu)} \right) t^s,
\] where $t$ and $z$ are indeterminates.  We see that the coefficient of $t^s$ in $H_n(z,t)$ is a polynomial in $z$ whose coefficients record the distribution of the values of $\cEMD$.

It is useful to see the first few values of $H$, which we compute using Mathematica and Theorem \ref{thm:HS}:

\begin{eqnarray*}
H_1(z,t) & = & \frac{1}{1-t} \\[2mm]
H_2(z,t) & = & \frac{t z+1}{(1-t)^2 (1-t z)} \\[2mm]
H_3(z,t) & = & \frac{-t^3 z^4-t^2 (2 z+1) z^2+t (z+2) z+1}{(1-t)^3 (1-t z)^2 \left(1-t z^2\right)} \\
\end{eqnarray*}

As before, when considering $p$ by $q$ matrices we can analogously define $H_{p,q}(z,t)$.  We also extend the definition so that $H_{p,q} = 0$ if either of $p$ or $q$ is not positive.  We obtain a similar series, namely
\[
    H_{p,q}(z,t) := \sum_{s=0}^\infty \left( \sum_{(\mu,\nu) \in \mcC(s,p) \times \mcC(s,q)}
                z^{{\tiny \cEMD}(\mu,\nu)} \right) t^s.
\] Note that if $p<q$ we can regard $p$-tuples as $q$-tuples  by appending zeros, so the function $\cEMD$ is defined.

\begin{thm}\label{thm:HS}
For positive integers $p$ and $q$,
\[ H_{p,q}(z,t) = \frac{ H_{p-1,q}(z,t) + H_{p,q-1}(z,t) - H_{p-1,q-1}(z,t) }{1- z^{|p-q|} t}  \]
if $(p,q) \neq (1,1)$ and $H_{1,1} = \frac{1}{1-t}$.
\end{thm}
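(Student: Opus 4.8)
The plan is to reduce the rational-function identity to a single recurrence among the polynomial coefficients and then prove that recurrence by a four-part inclusion--exclusion on the last parts of the two compositions. Writing
\[
a_{p,q}(s) = \sum_{(\mu,\nu)\in\mcC(s,p)\times\mcC(s,q)} z^{\cEMD(\mu,\nu)},
\]
so that $H_{p,q}(z,t)=\sum_{s\ge 0} a_{p,q}(s)\,t^s$, the claimed identity is equivalent, after clearing the denominator $1-z^{|p-q|}t$ and comparing coefficients of $t^s$, to
\[
a_{p,q}(s) = z^{|p-q|}\,a_{p,q}(s-1) + a_{p-1,q}(s) + a_{p,q-1}(s) - a_{p-1,q-1}(s),
\]
with the convention $a_{p,q}(-1)=0$. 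The base case $(p,q)=(1,1)$ is immediate, since $\mcC(s,1)$ is a single point and hence $H_{1,1}=\sum_s t^s=1/(1-t)$.

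The key preliminary step is to replace $\cEMD$ by the explicit cumulative-sum formula coming from Theorem~\ref{thm:L1}. Extended to unnormalized compositions (the transportation linear program has an integral optimum, so the discrete and continuous minima agree) and to unequal lengths by appending zeros, Theorem~\ref{thm:L1} gives
\[
\cEMD(\mu,\nu) = \sum_{k=1}^{\max(p,q)-1}\left|\, \sum_{i=1}^k \mu_i - \sum_{i=1}^k \nu_i \,\right|,
\]
where $\mu_i=0$ for $i>p$ and $\nu_i=0$ for $i>q$. Writing $M_k=\sum_{i\le k}\mu_i$ and $N_k=\sum_{i\le k}\nu_i$ for the partial sums, every manipulation below is a computation with $\sum_k |M_k-N_k|$. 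Since $\cEMD$ is symmetric (the cost matrix $C$ is symmetric), $a_{p,q}=a_{q,p}$, so I may assume $p\le q$ and write $|p-q|=q-p$.

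Next I would partition $\mcC(s,p)\times\mcC(s,q)$ according to the last parts $\mu_p,\nu_q$ into the events $\{\mu_p\ge1,\ \nu_q\ge1\}$, $\{\mu_p=0\}$, $\{\nu_q=0\}$, and $\{\mu_p=0,\ \nu_q=0\}$, and combine them by inclusion--exclusion. For the first event, subtracting $1$ from both $\mu_p$ and $\nu_q$ is a bijection onto $\mcC(s-1,p)\times\mcC(s-1,q)$; it fixes $M_k,N_k$ for $k\le p-1$ and fixes $N_k$ for all $k\le q-1$, while for the padded indices $k=p,\dots,q-1$ the term is $|M_k-N_k|=s-N_k$, and $\nu_q\ge1$ forces $N_k\le s-1$, so each such term is a genuine nonnegative quantity that drops by exactly $1$ when the total mass becomes $s-1$. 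As there are $q-p$ such indices, $\cEMD$ decreases by exactly $|p-q|$, producing the factor $z^{|p-q|}$ and the term $z^{|p-q|}a_{p,q}(s-1)$. For the remaining three events, deleting a final part equal to $0$ leaves the padded partial-sum sequence unchanged (it had already reached $s$, i.e.\ $M_{p-1}=M_p=s$ when $\mu_p=0$, and $N_{q-1}=N_q=s$ when $\nu_q=0$), so $\cEMD$ is literally preserved and the restricted sums equal $a_{p-1,q}(s)$, $a_{p,q-1}(s)$, and $a_{p-1,q-1}(s)$ respectively. Adding the four contributions with inclusion--exclusion signs yields the displayed recurrence, hence the theorem.

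The step I expect to be the main obstacle is the bookkeeping of the padded cumulative sums in the first event: one must verify that each of the $q-p$ padding terms is truly of the form $s-N_k$ with $N_k\le s-1$, so that removing one unit of mass lowers it by precisely $1$ rather than leaving an absolute value pinned at $0$, and, symmetrically, that deleting a zero final part genuinely produces the same padded sequence in the reduction cases. Once the inequality $N_k\le s-1$ (from $\nu_q\ge1$) and the identities $M_{p-1}=s$ and $N_{q-1}=s$ (from $\mu_p=0$ and $\nu_q=0$) are pinned down, the rest is routine verification with the formula $\sum_k|M_k-N_k|$.
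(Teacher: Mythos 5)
Your proof is correct, but it takes a genuinely different route from the paper's. The paper proves Theorem \ref{thm:HS} by interpreting $H_{p,q}(z,t)$ as a deformed Hilbert series of the coordinate ring of the rank-at-most-one $p\times q$ matrices: via Propositions \ref{prop:poset} and \ref{prop:bijection}, pairs of compositions index monomials whose exponent matrices are supported on chains in $[p]\times[q]$, and the recursion is read off by splitting each monomial according to its $x_{p,q}$-exponent (the geometric series in that exponent produces the factor $1/(1-z^{|p-q|}t)$) and counting the remaining monomials --- whose chains must avoid row $p$ or column $q$, since an entry in row $p$ and an entry in column $q$ are incomparable unless one of them is $(p,q)$ --- by inclusion--exclusion. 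You instead work entirely at the coefficient level: you restate the identity as a recurrence for $a_{p,q}(s)$ and prove it by partitioning pairs of compositions according to their last parts, using the partial-sum formula of Theorem \ref{thm:L1} to track exactly how $\cEMD$ changes under the unit-subtraction bijection (a drop of exactly $|p-q|$, justified by the inequality $N_k\le s-1$) and under deletion of a zero final part (no change). The two decompositions are structurally parallel --- under RSK your event $\{\mu_p\ge1,\ \nu_q\ge1\}$ corresponds precisely to chain matrices with $J_{p,q}\ge1$, and your zero-last-part events to chains avoiding row $p$ or column $q$ --- but your argument bypasses the RSK bijection and the Segr{\'e}/Hilbert-series framework entirely, and it makes rigorous the EMD bookkeeping that the paper's proof compresses into the phrase ``the variable $x_{p,q}$ is multiplied by $z^{|p-q|}t$.'' What you lose is the conceptual link to the Segr{\'e} embedding that motivates the rest of the paper; what you gain is a self-contained, checkable induction (both proofs invoke Theorem \ref{thm:L1}, so there is no circularity). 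One small point to tidy: when $p=1$ or $q=1$, your identification of the zero-last-part sums with $a_{p-1,q}(s)$, etc., needs a convention for zero-part compositions ($\mcC(s,0)=\emptyset$ for $s\ge1$ and a single empty composition for $s=0$), which differs from the paper's blanket convention $H_{0,q}=0$; this is harmless, since under either convention the boundary terms $H_{p-1,q}$ and $H_{p-1,q-1}$ cancel identically and the stated identity is unaffected.
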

This proof will also be given in Section \ref{sec:Proofs of the Theorems}, after we have developed some of the consequences in the remainder of this section.

\subsection{The partially ordered set $[p] \times [q]$ }

Recall that a \emph{partially ordered set} is a set $S$ together with a relation, $\preceq$, which is required to be reflexive, antisymmetric and transitive.  In particular, given positive integers $p$ and $q$, we define $S = [p] \times [q]$, and
\[
    (i,j) \preceq (i',j') \iff i'-i \in \bbN \mbox{ and } j'-j \in \bbN
\] for $1 \leq i,i' \leq p$ and $1 \leq j,j' \leq q$,
which is a partially ordered set.

It is important to note that not all elements are comparable with respect to this order.  For example, if $p=q=2$, clearly $(1,2) \not \preceq (2,1)$ and $(2,1) \not \preceq (1,2)$.  We say that $(1,2)$ and $(2,1)$ are incomparable.  A subset of $S$ in which all pairs are comparable is called a \emph{chain}.

Given a $p$ by $q$ matrix, $J$, we define the \emph{support} as
\[
    \mbox{support}(J) := \{ (i,j) : J_{ij} > 0 \}.
\]

\begin{prop}\label{prop:poset}
Let $p$ and $q$ be positive integers, and $s \in \bbN$.
For $\mu \in \mcC(s,p)$, $\nu \in \mcC(s,q)$ and
$J \in \mcJ_{\mu \nu}$, there exists a $J' \in \mcJ_{\mu \nu}$ such that the support of $J'$ is a chain in $[p] \times [q]$, and $\langle J', C \rangle \leq \langle J, C \rangle$.
\end{prop}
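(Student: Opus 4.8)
The goal is to show that among all optimal (and even sub-optimal) joint distributions, we can always find one whose support forms a chain in the poset $[p] \times [q]$, without increasing the cost $\langle J, C\rangle$. The plan is to argue by a local "uncrossing" procedure: whenever the support of $J$ contains two incomparable cells, we construct a new matrix $J'$ with the same marginals, strictly smaller (or equal) cost, and in a well-chosen sense "closer" to having chain support.

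The plan is as follows. First I would characterize the obstruction to being a chain. Two cells $(i,j)$ and $(i',j')$ in the support are incomparable precisely when (after relabeling) $i < i'$ but $j > j'$, i.e. the support contains an "inversion." The key local move is a $2\times 2$ adjustment on the four cells $(i,j), (i',j'), (i,j'), (i',j')$: for $\delta = \min(J_{ij}, J_{i'j'}) > 0$, set
\[
    J' = J - \delta\, e_{i,j} - \delta\, e_{i',j'} + \delta\, e_{i,j'} + \delta\, e_{i',j'}.
\]
This preserves all row and column sums, so $J' \in \mcJ_{\mu\nu}$, and it zeroes out at least one of the two offending entries. Second, I would verify the cost does not increase. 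Writing $i<i'$ and $j>j'$, the change in cost is $\delta\bigl(C_{ij'} + C_{i'j} - C_{ij} - C_{i'j'}\bigr) = \delta\bigl(|i-j'| + |i'-j| - |i-j| - |i'-j'|\bigr)$. The crucial inequality is that this quantity is $\le 0$: swapping the column assignments of two rows so that the larger row index pairs with the larger column index can only decrease total $\ell_1$ transport cost. This is exactly the statement that the cost matrix $C = (|i-j|)$ is a Monge matrix (it satisfies the Monge/supermodularity inequality $C_{ij} + C_{i'j'} \le C_{ij'} + C_{i'j}$ for $i<i'$, $j<j'$), which for $C_{ij}=|i-j|$ is a short case-check on the relative order of $i,i',j,j'$ on the real line.

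Third, I would set up the termination argument. Applying one move strictly decreases the number of nonzero support cells (it removes at least one), or more robustly, one can track a potential such as $\sum_{(i,j)\in \mathrm{support}(J)} (i-j)^2$ or the total number of incomparable pairs in the support, which strictly decreases under each uncrossing while the cost is nonincreasing. Since the support is finite and each move reduces this nonnegative integer potential, the process terminates after finitely many steps at a matrix $J'$ whose support has no incomparable pairs, i.e. is a chain, with $\langle J', C\rangle \le \langle J, C\rangle$ as required.

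The main obstacle I anticipate is ensuring the termination argument is genuinely monotone. A naive "remove one support cell per move" count is delicate because the introduced cells $(i,j')$ and $(i',j')$ may already lie in the support, and an individual move might create new incomparabilities with other support cells even as it resolves one. The cleanest fix is to choose the potential function carefully — for instance, lexicographically minimizing $\langle J, C\rangle$ first and then $\sum_{(i,j)} (i-j)^2 J_{ij}$ (or the number of inversions in the support) — and to argue that a cost-minimizing matrix which also minimizes this secondary statistic cannot contain an inversion, since the uncrossing move would strictly reduce the secondary statistic without increasing the cost. This converts the "algorithm terminates" worry into an "extremal configuration has no inversions" argument, which is cleaner and avoids tracking how individual moves interact.
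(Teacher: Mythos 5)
Your proposal is correct and follows essentially the same route as the paper: the paper's own proof is exactly this $2\times 2$ uncrossing move with $\delta=\min(J_{ij},J_{i'j'})$, stated once with the cost comparison and the termination left implicit, so your explicit Monge-inequality check and your extremal/lexicographic termination argument supply precisely the details the paper glosses over (and your worry about naively counting support cells is well founded, since the move can add two new support cells while removing one). One slip to fix before this could stand as a proof: in your list of the four affected cells and in the displayed formula for $J'$ you wrote $(i',j')$ and $e_{i',j'}$ where $(i',j)$ and $e_{i',j}$ are meant, so as literally written two terms cancel and the move fails to preserve the marginals, although your cost computation $\delta\bigl(C_{ij'}+C_{i'j}-C_{ij}-C_{i'j'}\bigr)\le 0$ makes the intended move unambiguous.
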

\begin{proof}  Suppose there exist incomparable elements $(i,j), (i',j')$ such that $J_{ij},J_{i'j'} > 0 $.  Without loss of generality assume $i'<i$, $j<j'$, and $0< J_{ij}\leq J_{i'j'}$.  We construct $J'$ as follows:
\[
\left[
  \begin{array}{ccccc}
      &   &   &   &   \\
      &   &   & J_{i'j'} &   \\
      &   &   &   &   \\
      & J_{ij} &   &  &   \\
      &   &   &   &   \\
  \end{array}
\right]
\]

For $(k,l) \not \in \{(i,j), (i',j'), (i',j), (i,j') \}$ let $J'_{kl} = J_{kl}$.  Next, let $J'_{ij} = 0$ and
\[
    J'_{i'j'} = J_{i'j'} - J_{ij}
\]
which is non-negative.  And,
\[
    J'_{i'j} = J_{i'j} + J_{ij}
\]
\[
    J'_{ij'} = J_{ij'} + J_{ij}.
\]
The result follows.
\end{proof}
The point here is that we will only need to consider matrices $J$ with support on a chain.

\subsection{A special case of the Robinson-Schensted-Knuth correspondence}

In this subsection we recall, in detail, a special case of the Robinson-Schensted-Knuth correspondence (RSK), see \cite{F}.
For those familiar with RSK, we consider the case where the Young diagrams have only one row.  For non-experts, the exposition here does not require any knowledge of RSK.

Given $\mu \in \mcC(s,n)$ with $\mu = (\mu_1, \ldots, \mu_n)$ we define the \emph{word} of $\mu$ to be a finite weakly increasing sequence of positive integers, $w = w(\mu) = w_1 w_2 w_3 \cdots$ where the number of times $k$ occurs in $w$ is equal to $\mu_k$.

As an example, if $\mu = (3,0,2,1,0)$ then
\[
    w = 1 1 1 3 3 4.
\]
Note that the length of the word is equal to $s = \sum \mu_i$, and all components of $w$ are at most $n$.

Next, for $s\in \bbN$ and positive integers $p$ and $q$, we define
\[
  R(p,q;s) :=  \left\{ J \in \bbM_{p,q} :(\forall i,j), J_{ij} \in \bbN, \sum_{i,j} J_{ij} = s \mbox{ and support$(J)$ is a chain}   \right\}.
\]

\begin{prop}\label{prop:bijection}
For a given $s \in \bbN$ and positive integers $p$ and $q$, we have a bijection, $\Phi$, between $\mcC(s,p) \times \mcC(s,q)$ and $R(p,q;s)$.
\end{prop}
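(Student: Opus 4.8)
The plan is to construct $\Phi$ and its inverse explicitly, passing through the intermediate object of a \emph{two-line array} (biword) whose two rows are the words $w(\mu)$ and $w(\nu)$. Given $(\mu,\nu) \in \mcC(s,p) \times \mcC(s,q)$, the words $w(\mu) = a_1 a_2 \cdots a_s$ (entries in $[p]$) and $w(\nu) = b_1 b_2 \cdots b_s$ (entries in $[q]$) are weakly increasing sequences of the common length $s$. I would define $\Phi(\mu,\nu) = J$ by letting $J_{ij}$ count the indices $k$ with $(a_k, b_k) = (i,j)$; equivalently $J_{ij} = |\{ k : a_k = i \text{ and } b_k = j \}|$.

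First I would check that $\Phi$ actually lands in $R(p,q;s)$. The entries are clearly non-negative integers summing to $s$, the row sums recover $\mu$ (the number of $i$'s in $w(\mu)$) and the column sums recover $\nu$. The support-is-a-chain condition is the one step that genuinely uses that the words are weakly increasing: if $(i,j)$ and $(i',j')$ both lie in the support with $i < i'$, choose witnessing indices $k, k'$; since $w(\mu)$ is weakly increasing, $a_{k'} = i' > i = a_k$ forces $k' > k$, and then since $w(\nu)$ is weakly increasing $b_{k'} \geq b_k$, i.e. $j' \geq j$, so $(i,j) \preceq (i',j')$. The case $i = i'$ is immediate. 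Hence any two support elements are comparable, so the support is a chain.

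For the inverse $\Psi$, I would start from $J \in R(p,q;s)$. Since the support of $J$ is a chain, $\preceq$ restricts to a total order on it, so I can list the support uniquely as $(i_1,j_1) \prec (i_2,j_2) \prec \cdots \prec (i_m,j_m)$ with both coordinate sequences weakly increasing. Reading the column $\binom{i_t}{j_t}$ with multiplicity $J_{i_t j_t}$, for $t = 1, \ldots, m$, produces a two-line array whose top and bottom rows are weakly increasing words; these are declared to be $w(\mu)$ and $w(\nu)$, recovering a composition $\mu$ of $s$ into $p$ parts and $\nu$ of $s$ into $q$ parts.

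That $\Phi$ and $\Psi$ are mutually inverse is then mostly bookkeeping: $\mu$ and $\nu$ are exactly the row- and column-sum vectors of $J$, and the correspondence between a composition and its word is already a bijection, so $\Psi \circ \Phi$ and $\Phi \circ \Psi$ fix their inputs once one notes that in a weakly increasing pair of words equal columns must occur in consecutive positions, so the sorted two-line array and the matrix carry precisely the same information. The step requiring the most care --- and the main obstacle --- is verifying the equivalence ``support of $J$ is a chain'' $\iff$ ``both rows of the associated two-line array are weakly increasing,'' since this is exactly what specializes the Robinson--Schensted--Knuth correspondence to single-row tableaux, and it is the place where the chain hypothesis in the definition of $R(p,q;s)$ is indispensable; without it a matrix is not determined by its marginals, and $\Psi$ would fail to be well defined.
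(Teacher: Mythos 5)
Your proof is correct and follows essentially the same route as the paper's: both define $\Phi$ by pairing the $k$-th letters of the words $w(\mu)$ and $w(\nu)$ and counting occurrences of each pair $(i,j)$, recovering $(\mu,\nu)$ from $J$ as its row and column sums. The only difference is one of detail --- you spell out the verification that the support is a chain and construct the inverse $\Psi$ explicitly, both of which the paper merely asserts.
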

\begin{proof}
Given $(\mu,\nu) \in \mcC(s,p)\times \mcC(s,q)$, let $u,v$ be the words of $\mu$ and $\nu$ respectively,
\[
    u = u_1 u_2 u_3 \cdots u_s
\] with $1 \leq u_i \leq p$, and
\[
    v = v_1 v_2 v_3 \cdots v_s
\] with $1 \leq v_j \leq q$.
Define a $p$ by $q$ matrix by
\[
    J_{ij} = |\{ k : (u_k,v_k) = (i,j) \}|
\] for $1 \leq i \leq p$ and $1 \leq j \leq q$.  Note that the support of $J$ is a chain.
We define $\Phi(\mu,\nu) = J$.  Given $J$, we can recover $\mu$ and $\nu$ as the row and column sums of $J$.
\end{proof}

\subsection{Rank one matrices and the Segr{\'e} embedding}

We let $\bbD^{\leq k}(p,q)$ denote the set of $p$ by $q$ matrices with rank at most $k$, which is a closed affine algebraic set, called a \emph{determinantal variety}.  For a relatively recent expository article about the role these varieties play in algebraic geometry and representation theory see \cite{EHP}.

In this section we consider the $k=1$ case, in our context. Define $P : \bbR^p \times \bbR^q \rightarrow \bbM_{p,q}$ by
\[
    P(v,w) = v w^T
\] for $v \in \bbR^p$ and $w \in \bbR^q$.  Note that if $P(v,w) \neq 0$ then the rank is 1.
In fact, the image of $P$ consists of those matrices with rank at most 1.  So if $p,q > 1$ then $P$ is not surjective.

Injectivity of $P$ fails as well since for non-zero $c \in \bbR$, $v$, $w$ we have
$P(v,w) = P(cv, \frac{1}{c}w)$.   However, if we pass to projective space we recover an injective map.

In this light, let $\bbR \bbP^n$ be an $n$-dimensional real projective space, that is:
\[
    \bbR \bbP^n = \{ \bbR v : 0 \neq v \in \bbR^{n+1} \}
\] where $\bbR v$ denotes the 1-dimensional subspace spanned by non-zero $v$.  We will also write $\bbR \bbP^n := \bbP( \bbR^{n+1})$.

The \emph{Segr{\'e}} embedding,
\[
    \bbP( \bbR^p ) \times \bbP( \bbR^q ) \rightarrow \bbP( \bbR^{pq} )
\] is defined as follows: first, we note that we can identify $\bbR^{pq}$
with the $p$ by $q$ matrices by choosing bases.  Next, given an ordered pair of projective points
(i.e. one-dimensional subspaces) we can choose non-zero vectors $v$ and $w$ respectively.
The value of the Segr{\'e} embedding is the one-dimensional subspace in $\bbM_{p,q}$ spanned by the matrix $P(v,w)$.  It is easily checked that this map is well-defined and injective.

The image of the Segr{\'e} embedding gives rise to a projective variety structure on the set-cartesian product of the two projective varieties.  The projective coordinate algebra of the Segr{\'e} embedding is intimately related to $H_{p,q}(z,t)$, which we will see next.

Let $m_{ij}$ be a choice of (algebraically independent) indeterminates.  We consider the polynomial algebra
\[
    \mcA_{p,q} = \bbR[ m_{ij} : 1 \leq i \leq p, 1 \leq j \leq q].
\]

Then for $1\leq i < i' \leq p$, and $1\leq j< j' \leq q$ define
\[
    \Delta(i,i'; j,j')  =
    \left|
        \begin{array}{cc}
            m_{i  j }   & m_{i j' } \\
            m_{i' j } & m_{i' j'}
        \end{array}
    \right|.
\]
The ideal, $\mcI$, generated by the $\Delta(i,i'; j,j')$ vanishes exactly on the matrices of rank 1.  Conversely, any polynomial function that vanishes on the rank at most 1 matrices is in $\mcI$.
The algebra of coordinate functions on the rank at most 1 matrices is then isomorphic to the quotient of $\mcA_{p,q}$ by $\mcI$.  Define $\mcR(p,q) := \mcA_{p,q}/\mcI$.

The point here is that monomials involving variables which have indices that are not comparable with respect to $\preceq$ may be replaced (modulo $\mcI$) with comparable indices.  That is, $m_{ij}m_{i'j'}$ can be replaced with $m_{i'j}m_{ij'}$. This process may be thought of as ``straightening'' and is related to the non-negative integer matrices $J$ with support in a chain.  The matrix $J$ may be thought of as the exponents in a monomial.

More generally, the situation may be put into the context of Gr{\"o}bner bases.  The cost matrix $C$ used here assigns a number to each pair of indices.  This number can be used to scale the degree of $m_{ij}$.
 Using this new notion of degree, we can set up a partial order of the monomials, which can then be extended (say, lexicographically) to a well ordering of the monomials that is compatible with multiplication.  That is, we can create a term order (see \cite{CLO}).  The minors generating the ideal $\mcI$ are indeed a Gr{\"o}bner basis.  The complement of the ideal of leading terms is then a vector space basis for the quotient by $\mcI$.

For $s \in \bbN$, let $\mcA_{p,q}^s$ denote the subspace of homogeneous degree $s$ polynomials,
and set $\mcR_{p,q}^s = \mcA_{p,q}^s / (\mcA_{p,q}^s\cap \mcI)$.  Since $\mcI$ is generated by homogeneous polynomials, we have
\[
    \mcR_{p,q} = \bigoplus_{s=0}^\infty \mcR_{p,q}^s.
\]
That is, we have an algebra gradation by polynomial degree.

The polynomial functions on $\bbR^p$ (resp. $\bbR^q$) will be denoted $\mcA_p$ (resp. $\mcA_q$).  Given vectors $v \in \bbR^p$ and $w \in \bbR^q$ an element of the tensor product $\mcA_p \otimes \mcA_q$ defines a function on $\bbR^p \times \bbR^q$ with value $f(v) g(w)$.  Given an element
$(v,w) \in \bbR^p \times \bbR^q$, and $f \otimes g \in \mcA_p \otimes \mcA_q$, the value of $f \otimes g$ on $(v,w)$ is given by $f(v) g(w)$.   Extending by linearity we obtain an algebra isomorphism from the polynomials on $\bbR^p \times \bbR^q$ to the tensor product algebra $\mcA_p \otimes \mcA_q$.

The quadratic map $P$, defined above, gives rise to an algebra homomorphism,
\[
    P^* : \mcR_{p,q} \rightarrow \mcA_{p} \otimes \mcA_{q},
\] defined such that $P^*(F)$ is a function on $\bbR^p \times \bbR^q$ from a function $F$ on $\bbD^{\leq 1}_{p,q}$.  This is done via the usual adjoint map given by $[P^*(F)](v,w) = F( P(v,w))$.

The image of $P^*$ is given as the ``diagonal'' subalgebra:
\[
    \bigoplus_{s=0}^\infty \mcA_p^s \otimes \mcA_q^s.
\]

\bigskip
From our point of view, the significance of this structure is as follows:
\begin{itemize}
\item The dimension
\[
    \dim \left( \mcA_p^s \otimes \mcA_q^s \right) =
    \binom{s+p-1}{p-1} \binom{s+q-1}{q-1},
\]
which is equal to the cardinality of $\mcC(s,p) \times \mcC(s,q)$.  That is, a basis may be parameterized by a pair of compositions.
\item The (finite dimensional) vector space $\mcR_{p,q}^s$ will have a dimension also equal to the above since $P^*$ is an isomorphism.
\item The bijection $\Phi$ from Proposition \ref{prop:bijection} establishes that the above dimension is
given by the cardinality of $R(p,q;s)$.
\item A basis for $\mcR_{p,q}^s$ may be given by the monomials of the form $\prod m_{ij}^{P_{ij}}$ where $P \in R(p,q;s)$.
\item These monomials correspond to the monomials in $\mcA_p^s \otimes \mcA_q^s$ with exponents
$(\mu,\nu) \in \mcC(s,p) \times \mcC(s,q)$.
\end{itemize}

\subsection{The specialization and a derivative}

From the definition it is relatively easy to see that
\[
    H_{p,q}(0,t) = \frac{1}{(1-t)^{\min(p,q)}}.
\]

We next turn to the specialization $H_{p,q}(1,t)$, which turns out to be the Hilbert series of the rank at most 1 matrices.  That is to say
\[
    H_{p,q}(1,t) = \sum_{s = 0}^\infty (\dim \mcR_{p,q}^s) t^s.
\]
In \cite{EW} this series was computed in Equation (6.4) as
\[
    H_{p,q}(1,t) = \frac{\sum_{i=0}^{\min(p-1,q-1)} \binom{p-1}{i}\binom{q-1}{i} t^i }{(1-t)^{p+q-1}}.
\]

In this sense, $H(z,t)$ interpolates between the generating function for compositions and the Hilbert series of the determinantal varieties (at least in the rank one case).  Moreover, for generic $z$ we have a relationship to the EMD.

\bigskip
Our goal is to compute the expected value of $\cEMD$.  Therefore, it is natural to compute the partial derivative of $H_{p,q}(z,t)$ with respect to $z$, and then set $z=1$.  From the definition of $H_{p,q}$,

\[
    \left. \frac{\partial H_{p,q}(z,t)}{\partial z} \right|_{z=1} =
    \sum_{s=0}^\infty \left(\sum_{(\mu,\nu) \in \mcC(s,p) \times \mcC(s,q)} \cEMD(\mu, \nu) \right) t^s
\]

To expand this we start with
\[ H_{p,q}(z,t) = \frac{ H_{p-1,q} + H_{p,q-1} - H_{p-1,q-1} }{1- z^{|p-q|} t},  \]
the recursive relationship from Theorem \ref{thm:HS}.  Then let the partial derivative of $H_{p,q}$ with respect to $z$ be denoted $H'_{p,q}$.  We find the derivative using the ``quotient rule''

\begin{equation}
\hspace{-9cm} H'_{p,q}(z,t) =
\frac{\partial}{\partial z} H_{p,q}(z,t) =
\end{equation}
\[
\frac{(H'_{p-1,q} + H'_{p,q-1} - H'_{p-1,q-1})(1-z^{|p-q|}t) +
|p-q|z^{|p-q|-1} t (H_{p-1,q} + H_{p,q-1} - H_{p-1,q-1})} {(1-z^{|p-q|}t)^2}.
\]

When $z=1$ this becomes
\begin{equation}\label{eqn:diff z=1}
\begin{split}
H'_{p,q}(1,t) =
\frac{1}{(1-t)^2} \displaystyle \bigg( \Big(H'_{p-1,q}(1,t) + H'_{p,q-1}(1,t) - H'_{p-1,q-1}(1,t)\Big)\Big(1-t \Big)  + \\
 |p-q|t \Big(H_{p-1,q}(1,t) + H_{p,q-1}(1,t) - H_{p-1,q-1}(1,t)\Big)  \bigg).
\end{split}
\end{equation}

Before proceeding it is useful to see some initial values:

\[
\begin{array}{ccccc}
H'_{1,1}= 0 & & \displaystyle H'_{1,2}=\frac{t}{(1-t)^3} & & \displaystyle H'_{1,3}= \frac{3 t}{(1-t)^4} \\
\\ \\
\displaystyle H'_{2,1}= \frac{t}{(1-t)^3} & & \displaystyle H'_{2,2}=\frac{2 t}{(1-t)^4} & &\displaystyle H'_{2,3}=\frac{t (3 t+5)}{(1-t)^5} \\
\\ \\
\displaystyle H'_{3,1}= \frac{3 t}{(1-t)^4} & & \displaystyle H'_{3,2}= \frac{t (3 t+5)}{(1-t)^5} & & \displaystyle H'_{3,3}= \frac{8 t (t+1)}{(1-t)^6}
\end{array}
\]
\\ \\

Both $H_{p,q}(1,t)$ and $H'_{p,q}(1,t)$ are rational functions.  We anticipate that their numerators are
\[
    W_{p,q}(t) := (1-t)^{p+q-1} H_{p,q}(1,t)
\]
and
\[
    N_{p,q}(t) := (1-t)^{p+q} H'_{p,q}(1,t).
\]

Thus, multiplying by $(1-t)^{p+q}$ on both sides of Equation \ref{eqn:diff z=1} gives:
\[
N_{p,q} = \frac{1}{(1-t)^2}\Big(\big((1-t)(N_{p-1,q}+N_{p,q-1}) - (1-t)^2 N_{p-1,q-1}\big)(1-t) +
\]
\[
\hspace{4cm} |p-q|t \big((1-t)^2 (W_{p-1,q} + W_{p,q-1}) - (1-t)^3 W_{p-1,q-1}\big) \Big)
\]

or
\[
N_{p,q} =
N_{p-1,q}+N_{p,q-1} - (1-t) N_{p-1,q-1} +
|p-q| \, t \, (W_{p-1,q} + W_{p,q-1} - (1-t) W_{p-1,q-1})
\]

If we also note that
\[
    W_{p,q} = W_{p-1,q} + W_{p,q-1} - (1-t)W_{p-1,q-1},
\]
we ultimately obtain
\begin{equation}\label{eqn:N recursion}
N_{p,q} =
N_{p-1,q} + N_{p,q-1} - (1-t) N_{p-1,q-1} +
|p-q| \, t \, W_{p,q}.
\end{equation}
An easy induction shows that both $W_{p,q}(t)$ and $N_{p,q}(t)$ are polynomials in $t$.

Before proceeding it is instructive to recall our goal of finding the expected value of $\cEMD$.  In this light,
define
\[
    \mcN(p,q;s) := \sum_{(\mu,\nu) \in \mcC(s,p) \times \mcC(s,q)}
    \cEMD(\mu,\nu)
\]
for $s \in \bbN$ and positive integers $p$ and $q$.  The expected value of $\cEMD$ will be then obtained from
\[
    \lim_{s \rightarrow \infty} \displaystyle \, \, \frac{1}{s} \, \,
    \frac{\mcN(p,q;s)}{
        \binom{s+p-1}{p-1}
        \binom{s+q-1}{q-1}
    },
\]
which we will show in the proof of Theorem \ref{thm:main} to be $\mcM_{p,q}$.  In the next subsection we will use Proposition \ref{prop:N} to find the asymptotic value as $s \rightarrow \infty$ for fixed values of $p$ and $q$.  First we need more information about $\mcN(p,q;s)$.

\begin{prop}\label{prop:N}  Given positive integers $p$ and $q$,
\[
    \frac{N_{p,q}(t)}{(1-t)^{p+q}} = \sum_{s=0}^\infty \mcN(p,q;s) t^s
\]
\end{prop}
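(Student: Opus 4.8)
The plan is to observe that, once we unwind the definition $N_{p,q}(t) := (1-t)^{p+q} H'_{p,q}(1,t)$, the proposition is equivalent to the single identity $H'_{p,q}(1,t) = \sum_{s=0}^\infty \mcN(p,q;s)\, t^s$. Dividing the defining equation for $N_{p,q}$ by $(1-t)^{p+q}$ immediately reduces the claim to this identity, so the entire content is to justify that differentiating the generating function $H_{p,q}(z,t)$ with respect to $z$ and then specializing at $z=1$ reproduces exactly the sum of the values of $\cEMD$.

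First I would fix $s$ and isolate the coefficient of $t^s$ in $H_{p,q}(z,t)$, namely the polynomial $P_s(z) := \sum_{(\mu,\nu) \in \mcC(s,p)\times\mcC(s,q)} z^{\cEMD(\mu,\nu)}$. Because $\mcC(s,p)\times\mcC(s,q)$ is a finite set, $P_s(z)$ is an honest polynomial in $z$, so differentiation in $z$ may be carried out termwise: $\tfrac{d}{dz}P_s(z) = \sum_{(\mu,\nu)} \cEMD(\mu,\nu)\, z^{\cEMD(\mu,\nu)-1}$. Evaluating at $z=1$ collapses every power of $z$ to $1$ and leaves $\sum_{(\mu,\nu)} \cEMD(\mu,\nu)$, which is precisely $\mcN(p,q;s)$ by its definition.

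Next I would reassemble these coefficientwise derivatives into a series in $t$. Since $H_{p,q}(z,t) = \sum_s P_s(z)\, t^s$ is a formal power series in $t$ whose every coefficient is a polynomial in $z$, differentiation with respect to $z$ commutes with the sum over $s$, giving $\partial_z H_{p,q}(z,t) = \sum_s \big(\tfrac{d}{dz}P_s\big)(z)\, t^s$. Setting $z=1$ then yields $H'_{p,q}(1,t) = \sum_s \mcN(p,q;s)\, t^s$, which is exactly the identity already displayed in the subsection on the specialization and the derivative. Combining this with $N_{p,q}(t) = (1-t)^{p+q}H'_{p,q}(1,t)$ and dividing through completes the argument.

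There is essentially no hard step here; the statement is a bookkeeping identity linking the definition of $N_{p,q}$ to the combinatorial sum $\mcN$. The only point deserving a word of care is the interchange of differentiation and summation, and this is harmless because each $t$-coefficient is a finite polynomial in $z$ (alternatively, one may work inside the common region $|z|,|t|<1$ where all the series converge and standard termwise differentiation applies). I would also note in passing that $N_{p,q}(t)$ is a genuine polynomial, as established by the induction following Equation \ref{eqn:N recursion}, so that both sides are well-defined rational functions and the asserted power-series expansion is legitimate.
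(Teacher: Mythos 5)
Your proposal is correct and follows essentially the same route as the paper's own proof: both reduce the statement to the identity $H'_{p,q}(1,t) = \sum_{s\geq 0} \mcN(p,q;s)\,t^s$ via the definition $N_{p,q}(t) = (1-t)^{p+q}H'_{p,q}(1,t)$, and then justify that identity by differentiating the defining series for $H_{p,q}(z,t)$ term by term and setting $z=1$. Your write-up simply makes explicit the coefficientwise-polynomial argument that the paper leaves implicit in the phrase ``differentiating \ldots\ term by term.''
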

\begin{proof}
We have seen that
\[
    \left. \frac{\partial H}{\partial z} \right|_{z=1} = \frac{N_{p,q}(t)}{(1-t)^{p+q}}.
\]
Differentiating the definition $H_{p,q}(z,t)$ term by term and then setting $z=1$ gives the result.
\end{proof}

Using Equation \ref{eqn:N recursion} and the formula for $W_{p,q}(t)$ one can efficiently compute
$N_{p,q}(t)$ for specific values of $p$ and $q$.  Following the method for generating functions, we multiply $N_{p,q}(t)$ and the series expansion of $\frac{1}{(1-t)^{p+q}}$.  Because the series expansion involves only binomial coefficients we are led to an efficient method for finding the expected value of $\cEMD$ on $\mcC(s,p) \times \mcC(s,q)$ for any given values of $p$, $q$ and $s$.  Consequently we determine the values of $\mcN(p,q;s)$.

Some initial data for $N_{n,n}(t)$ for $n = 1, \cdots, 12$ are:
\[
\begin{array}{l}
0 \\
2 t  \\
8 t (t+1) \\
4 t \left(5 t^2+14 t+5\right) \\
8 t \left(5 t^3+27 t^2+27 t+5\right) \\
2 t \left(35 t^4+308 t^3+594 t^2+308 t+35\right) \\
16 t \left(7 t^5+91 t^4+286 t^3+286 t^2+91 t+7\right) \\
8 t \left(21 t^6+378 t^5+1755 t^4+2860 t^3+1755 t^2+378 t+21\right) \\
16 t \left(15 t^7+357 t^6+2295 t^5+5525 t^4+5525 t^3+2295 t^2+357 t+15\right) \\
2 t \left(165 t^8+5016 t^7+42636 t^6+142120 t^5+209950 t^4+142120 t^3+42636 t^2+5016 t+165\right) \\
8 t \left(55 t^9+2079 t^8+22572 t^7+99484 t^6+203490 t^5+ \right. \\
\hspace{7.2cm} \left. 203490 t^4+99484 t^3+22572 t^2+2079 t+55\right)\\
4 t \left(143 t^{10}+6578 t^9+88803 t^8+499928 t^7+1352078 t^6+1872108 t^5+ \right.\\
\hspace{7.675cm} \left. 1352078 t^4+499928 t^3+88803 t^2+6578 t+143 \right)
\end{array}
\]

\bigskip

The coefficients of the above are non-negative integers, which we prove inductively.  Furthermore, they apparently are \emph{palindromic} --
that is the coefficient of $t^i$ matches the coefficient of $t^{d-i}$ where $d$ is the polynomial degree.   Lastly we note that the coefficients rise in value until the middle and then decrease -- that is to say they are \emph{unimodal}.   Polynomials with these properties are often of interest.   We conjecture:

\begin{conj}
    The coefficients of the polynomials $N_{n,n}(t)$ are unimodal and palindromic.
\end{conj}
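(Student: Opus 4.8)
The conjecture bundles two logically separate assertions, and my plan is to treat them independently, since palindromicity reduces to a reciprocity that I can see how to attack, while unimodality is where I expect the real difficulty. For palindromicity, I would first record the precise meaning suggested by the data: after the common factor of $t$, each $N_{n,n}(t)=\sum_{i=1}^{n-1}c_i t^i$ has $c_i=c_{n-i}$, equivalently $N_{n,n}(t)=t^{n}N_{n,n}(1/t)$. By Proposition \ref{prop:N} we have $\sum_{s\ge0}\mcN(n,n;s)t^s = N_{n,n}(t)/(1-t)^{2n}$, and since $N_{n,n}(1)\neq0$ the sequence $\mcN(n,n;s)$ agrees with a polynomial in $s$ of degree $2n-1$. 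Expanding $N_{n,n}(t)=\sum_i c_it^i$ against $1/(1-t)^{2n}$ gives $\mcN(n,n;s)=\sum_i c_i\binom{s-i+2n-1}{2n-1}$, and the binomial reflection $\binom{-m+2n-1}{2n-1}=(-1)^{2n-1}\binom{m-1}{2n-1}$ turns the palindromy condition $c_i=c_{n-i}$ into the \emph{equivalent} reciprocity $\mcN(n,n;-s-n)=-\mcN(n,n;s)$, an identity of polynomials in $s$ (the step is reversible: matching coefficients of the linearly independent shifted binomials $\binom{s+k}{2n-1}$ recovers $c_i=c_{n-i}$). So palindromicity is exactly a combinatorial reciprocity statement for the EMD-weighted lattice-point count $\mcN(n,n;s)$.

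To prove this reciprocity I would exploit the piecewise-linear description of the weight. By the integer version of Theorem \ref{thm:L1}, $\cEMD(\mu,\nu)=\sum_{j=1}^{n-1}|F_j-G_j|$ with $F_j=\mu_1+\cdots+\mu_j$ and $G_j=\nu_1+\cdots+\nu_j$. Passing to the coordinates $(F_1,\dots,F_{n-1})$ identifies $\mcC(s,n)$ with the lattice points of the dilated order simplex $\{0\le F_1\le\cdots\le F_{n-1}\le s\}\cong s\Delta_{n-1}$, so that $\mcN(n,n;s)=\sum_{F,G}\sum_j|F_j-G_j|$ is a piecewise-linearly weighted Ehrhart sum over a product of two dilated $(n-1)$-simplices. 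I would decompose this product along the hyperplanes $F_j=G_j$ into relatively open chambers on which every $|F_j-G_j|$ has constant sign, so that the weight restricts to a \emph{linear} functional on each chamber, and then apply Ehrhart--Macdonald reciprocity chamberwise. For a linear weight the reciprocity sign is $(-1)^{\dim+1}=(-1)^{(2n-2)+1}=-1$, which supplies the needed minus; the shift by $n$ is the common shift making the relative interior of each $s\Delta_{n-1}$ equal to the closure of $(s-n)\Delta_{n-1}$ (since $\binom{s-1}{n-1}=\binom{(s-n)+n-1}{n-1}$). The reflection isometry $i\mapsto n+1-i$, acting by $F_j\mapsto s-F_{n-j}$, $G_j\mapsto s-G_{n-j}$ and leaving $\sum_j|F_j-G_j|$ invariant, pairs each chamber with its mirror and is what reorganizes the interior contributions into $\mcN(n,n;s)$ at the argument $-s-n$. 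The delicate part is the bookkeeping: keeping the half-open chamber boundaries consistent so the individual reciprocities assemble without double-counting the diagonals $F_j=G_j$. An alternative, purely algebraic route is to prove a family of twisted reciprocities $t^{p+q}N_{p,q}(1/t)=\overline N_{p,q}$ (and likewise for $W_{p,q}$) for all $p,q$ by induction on $p+q$ via the recursion of Theorem \ref{thm:HS}, seeded by the manifest palindromicity of $W_{n,n}(t)=\sum_i\binom{n-1}{i}^2t^i$; the catch there is that the off-diagonal reciprocals are genuinely twisted, so one must pin down the normalization $\overline N_{p,q}$ that both specializes to diagonal palindromicity and propagates through the mixed term $|p-q|\,t\,W_{p,q}$.

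Unimodality is where I expect the main obstacle. For a palindromic polynomial with nonnegative coefficients, unimodality follows from log-concavity, hence from real-rootedness, and the small cases are encouraging: for $n=5$ one has $5t^3+27t^2+27t+5=(t+1)(5t^2+22t+5)$, a product of real-rooted palindromic pieces, and every $N_{n,n}(t)/t$ I have examined is real-rooted. I would therefore aim to prove that $N_{n,n}(t)/t$ has only real (necessarily negative) roots. The natural mechanism is an interlacing (Sturm-sequence) argument, but the recursion of Theorem \ref{thm:HS} lives on the two-dimensional array indexed by $(p,q)$, mixes $N_{p,q}$ with $W_{p,q}$, and carries the sign-indefinite factor $-(1-t)$; the hard part is setting up a \emph{compatible} system of interlacings for the entire array $\{N_{p,q},W_{p,q}\}$, not merely its diagonal, so that real-rootedness is preserved at each step. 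A useful input is that $W_{n,n}(t)=\sum_i\binom{n-1}{i}^2t^i$ is already real-rooted (it is a standard transform of a Legendre polynomial), which can seed such an induction.

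If a direct interlacing argument proves intractable, I would fall back on one of two routes. The first is $\gamma$-nonnegativity: write $N_{n,n}(t)/t=\sum_{k}\gamma_k\,t^k(1+t)^{\,n-2-2k}$ and prove $\gamma_k\ge0$ combinatorially, which for a palindromic polynomial immediately gives unimodality. The second leverages the algebraic-geometric backdrop: the palindromicity is a Gorenstein-type symmetry, and since the whole construction is tethered to the Segr\'e coordinate ring $\mcR_{n,n}$ (whose Hilbert numerator is the palindromic $W_{n,n}$), a hard-Lefschetz property for an appropriately chosen graded object would force unimodality. Each of these routes, however, first requires realizing $N_{n,n}$ as the Hilbert function of a concrete algebra or the $h$-vector of a shellable complex, a structure not handed to us by the present setup; producing that realization is, I expect, the crux of the entire conjecture.
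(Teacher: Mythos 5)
This statement is not proved in the paper at all: it is stated explicitly as a \emph{conjecture}, and the authors offer no argument for it, so there is no ``paper proof'' against which to match your attempt. What you have written is a research program, not a proof, and you should be clear-eyed that it does not close the conjecture. Its one genuinely completed step is the reformulation of palindromicity: the equivalence $N_{n,n}(t)=t^nN_{n,n}(1/t)\iff \mcN(n,n;-s-n)=-\mcN(n,n;s)$ is correct (it follows from Proposition \ref{prop:N} together with the standard rational-function reciprocity $\sum_{s\geq 1}h(-s)t^s=-H(1/t)$ for a polynomial coefficient function $h$), and recasting the conjecture as a combinatorial reciprocity for the EMD-weighted count is a real insight. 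But both of your proposed routes to that reciprocity stop exactly where the work begins. The chamberwise weighted-Ehrhart argument requires a half-open decomposition of the product of order simplices along the walls $F_j=G_j$ in which each piece's reciprocity (with the degree-1 homogeneous weight, sign $(-1)^{2n-1}$) assembles to the global identity with the shift by $n$; you name this bookkeeping as ``delicate'' and do not do it, and it is not routine, since the weight is only piecewise linear and the walls carry positive weight from the other coordinates. The inductive route via Theorem \ref{thm:HS} is blocked at the very first step: the paper's own data show $N_{2,3}(t)=t(3t+5)$, which is not palindromic, so no untwisted statement propagates through the recursion, and you have not identified the twisted normalization $\overline N_{p,q}$ whose existence your induction presupposes.

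For unimodality the situation is weaker still: real-rootedness via interlacing, $\gamma$-nonnegativity, and a hard-Lefschetz argument are each proposed conditionally, and you correctly identify that all of them require realizing $N_{n,n}(t)/t$ as an $h$-polynomial or Hilbert series of a structured object --- a realization the paper's setup does not provide and you do not construct. None of your individual mathematical assertions appears wrong (the small-case factorizations, the real-rootedness of $W_{n,n}(t)=\sum_i\binom{n-1}{i}^2t^i$, and the implication chains among real-rootedness, log-concavity, $\gamma$-positivity and unimodality are all fine), so this is a sound and well-informed plan of attack on an open problem; but as a proof attempt it has genuine gaps at every terminal step, and the conjecture remains open after your write-up just as it does in the paper.
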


\bigskip

In Theorem \ref{thm:main}, the $\pEMD$ has been normalized so that $\mu$ and $\nu$ are probability distributions.  In terms of pairs of compositions of $s$, this amounts to multiplying by $\frac{1}{s}$.  Note that since the order of the pole at $t=1$ in $H_{p,q}(1,t)$ is one less than the order of the pole at $t=1$ in $H'(1,t)$ we see that the (normalized) EMD is approaching a constant as $s \rightarrow \infty$.  Alternatively, we could choose not to normalize and then obtain a linear growth of $s \, \mcM_{p,q}$.

The following is a table with approximate values of $\mcM_{p,q}$ for $1 \leq p,q, \leq 5$,
\[
\begin{array}{c|ccccc}
p \backslash q & 1     & 2     & 3     & 4     & 5      \\ \hline
1 & 0.000 & 0.500 & 1.000 & 1.500 & 2.000  \\
2 & 0.500 & 0.333 & 0.667 & 1.100 & 1.567  \\
3 & 1.000 & 0.667 & 0.533 & 0.800 & 1.190  \\
4 & 1.500 & 1.100 & 0.800 & 0.686 & 0.914  \\
5 & 2.000 & 1.567 & 1.190 & 0.914 & 0.813
\end{array}
\]
We display a 3D plot for $1\leq p,q \leq 12$ in Figure \ref{fig:3D}.
\begin{figure}
\includegraphics{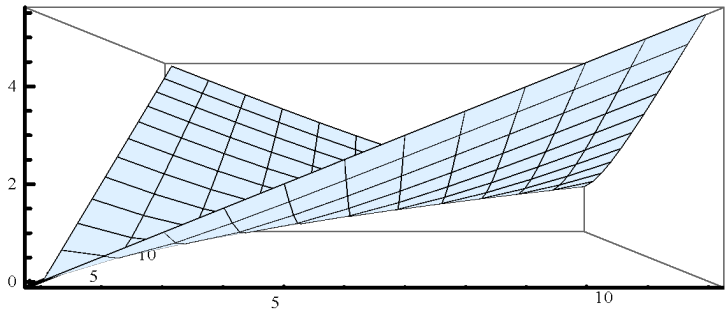}
\caption{3D plot of $\mcM_{p,q}$ for $1 \leq p,q \leq 12$}
\label{fig:3D}
\end{figure}

\section{Further Calculations and Proofs of the theorems}\label{sec:Proofs of the Theorems}

This section includes the main technical points of the paper, including the proofs of the main theorems.
It is more convenient to prove Theorem \ref{thm:main} after Theorems \ref{thm:L1} and \ref{thm:HS}.

First, however, we make explicit two useful scaled versions of the EMD.

\subsection{Unit normalized Earth Mover's Distance} $\,$

Averaging $\frac{\mcE(\mu-\nu)}{s}$ over $(\mu,\nu) \in \mcC(s,n) \times \mcC(s,n)$ gives rise to the expected value of the discrete EMD.  Taking the limit as $s \rightarrow \infty$ gives the expected value of the \emph{normalized EMD} on
$\mcP_n$.
Observe that the maximum value of the normalized EMD on $\mcP_n$ is $n-1$. The \emph{unit normalized EMD} will be defined as the normalized EMD scaled by $\frac{1}{n-1}$.  This scaling makes $\mcP_n$ into a metric space with diameter 1. When
working with real data in the last section of this paper, we will always use the unit normalized
Earth Mover's Distance.

As an example, we consider the discrete case where $\mcP_n$ is replaced by $\mcC(s,n)$.  In particular, choose $s=30$ and $n=5$ and calculate the exact histogram for the unit normalized distance. The mean of the distribution is obtained by expanding
\[ \frac{8 t \left(5 t^3+27 t^2+27 t+5\right)}{(1-t)^{10}} \]
as a series around $t=0$, then taking the coefficient of $t^{30}$ and dividing by
$\binom{30+5-1}{5-1}^2$ (the number of ordered pairs of distributions).
The approximate value is 26.2938.

For the unit normalized distance we divide by $s(n-1) = 30(5-1) = 120$.  That is, we divide by $s$ to obtain a probability distribution, and then divide by $n-1$ to scale the diameter to 1.
The unit normalized mean is approximately 0.219115 as shown in Figure \ref{fig:hist_30n5}.

\begin{figure}
\includegraphics{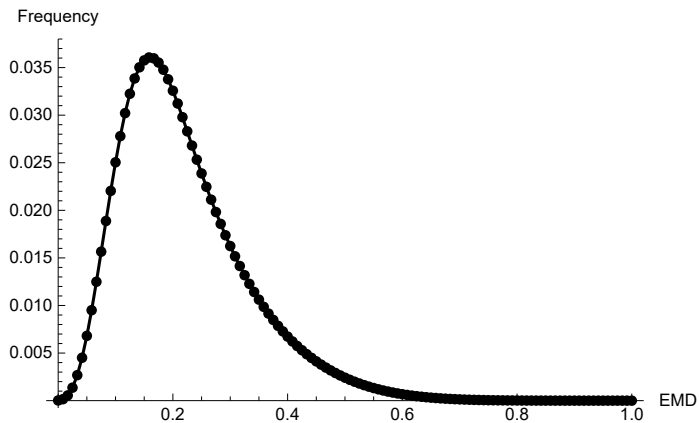}
\caption{Histogram of the unit normalized EMD for $s=30$, $n=5$}
\label{fig:hist_30n5}
\end{figure}

In the limiting case as $s \rightarrow \infty$, the mean decreases slightly from the $s=30$ case.  Again, the scaling sets the diameter of the metric space $\mcP_n$ to 1.  Thus, the expected value of the unit normalized EMD is
\[
    \widetilde \mcM_n :=     \frac{\mcM_n}{n-1}.
\]
Recall the notation that $\mcM_n = \mcM_{n,n}$, for positive integer $n$.

It should be noted that all values of $\widetilde \mcM_n$ are rational numbers.  We present the following approximate values
of $\widetilde \mcM_n$ for $n = 2, \cdots, 12$,
{\small
\[
\begin{array}{c||ccccccccccc}
n & 2 & 3 & 4 & 5 & 6 & 7 & 8 & 9 & 10 & 11 & 12 \\ \hline \\[-4mm]
 \widetilde \mcM_n
  & 0.3333
  & 0.2667
  & 0.2286
  & 0.2032
  & 0.1847
  & 0.1705
  & 0.1591
  & 0.1498
  & 0.1419
  & 0.1351
  & 0.1293
\end{array}
\]}
which are the limiting values as $s \rightarrow \infty$.  For finite choices of $s$ and $n$ we can compute the exact mean of the unit normalized EMD by expanding
\[
    \frac{N_{p,q}(t)}{(1-t)^{p+q}}
\] in the case when $p=q=n$, and then dividing by $s(n-1) \binom{s+n-1}{n-1}^2$.  We show the approximate values in the table below.
\[
\begin{array}{c||cccccc}
s \backslash n & 2 & 3 & 4 & 5 & \cdots & 12 \\ \hline
 1 & 0.5000 & 0.4444 & 0.4167 & 0.4000 & & 0.3611 \\
 2 & 0.4444 & 0.3889 & 0.3600 & 0.3422 & & 0.2991 \\
 3 & 0.4167 & 0.3600 & 0.3300 & 0.3113 & & 0.2649 \\
 4 & 0.4000 & 0.3422 & 0.3113 & 0.2918 & & 0.2428 \\
 5 & 0.3889 & 0.3302 & 0.2985 & 0.2784 & & 0.2272 \\
 10 & 0.3636 & 0.3020 & 0.2681 & 0.2462 & & 0.1881 \\
 15 & 0.3542 & 0.2912 & 0.2561 & 0.2333 & & 0.1716 \\
 20 & 0.3492 & 0.2854 & 0.2497 & 0.2264 & & 0.1624 \\
 30 & 0.3441 & 0.2794 & 0.2430 & 0.2191 & & 0.1524 \\
 60 & 0.3388 & 0.2732 & 0.2360 & 0.2114 & & 0.1415 \\
 120 & 0.3361 & 0.2700 & 0.2323 & 0.2073 & & 0.1355 \\
 180 & 0.3352 & 0.2689 & 0.2311 & 0.2060 & & 0.1335 \\
 360 & 0.3343 & 0.2678 & 0.2298 & 0.2046 & & 0.1314 \\
 500 & 0.3340 & 0.2675 & 0.2295 & 0.2042 & & 0.1308 \\
 750 & 0.3338 & 0.2672 & 0.2292 & 0.2039 & & 0.1303 \\
 1000 & 0.3337 & 0.2671 & 0.2290 & 0.2037 & & 0.1300 \\
 1250 & 0.3336 & 0.2670 & 0.2289 & 0.2036 & & 0.1299 \\
 1500 & 0.3336 & 0.2669 & 0.2289 & 0.2035 & & 0.1298 \\
 2000 & 0.3335 & 0.2669 & 0.2288 & 0.2034 & & 0.1296 \\
 10000 & 0.3334 & 0.2667 & 0.2286 & 0.2032 & & 0.1293 \\
\end{array}
\]

\subsection{Proof of Theorem \ref{thm:L1}}

\begin{proof}
By continuity it will suffice to show that this is true on a dense subset of $\mcP_n$.  Specifically, we will consider the special case that
if $\mu$ (resp. $\nu$) is of the form
\[
    \mu = \left( \frac{a_1}{s}, \cdots, \frac{a_n}{s} \right)
\] for some positive integer $s$ and $(a_1, \cdots, a_n) \in \mcC(s,n)$.
As $s \rightarrow \infty$, such points are dense in $\mcP_n$

For $p \leq n$ (resp. $q \leq n$) we can regard $\mcC(s,p)$
(resp. $\mcC(s,q)$) as being embedded in $\mcC(s,n)$ by appending zeros onto the right.

By induction on $p+q$ we will show that for any non-negative integer $s$,
\[
    \cEMD(\mu,\nu) = \mcE(\mu-\nu)
\] for $\mu \in \mcC(s,p)$ and $\nu \in \mcC(s,q)$.

If $p=q=1$ the result is trivial since there is only one composition of $s$.
Consider $p+q \geq 2$.
Without loss of generality assume $p \leq q$.

We proceed by induction on $s$ (inside the induction on $p+q$).
If $s = 0$ the statement is vacuous, and so the base case is clear.

For positive integer $s$ let $J$ be a p-by-q non-negative integer matrix such that $J$
has row and column sums $\mu$ and $\nu$ respectively and
$\langle J, C \rangle$ is minimal.  We shall show that
$\langle J, C \rangle = \mcE(\mu-\nu)$.

If the first row (resp. column) of $J$ is zero we can delete it and reduce to the inductive
hypothesis on $p+q$.  Therefore, we assume that there is a positive entry in the first row (resp. column) of $J$.
If $J_{11} > 0$ then we can subtract $J_{11}$ from $s$
and reduce to the inductive hypothesis on $s$.

We are therefore left with $J_{11} = 0$ and
the existence of $i>1$, $j>1$ with $J_{1j}>0$ and $J_{i1}>0$.
However, $(1,j)$ and $(i,1)$ are incomparable in the poset
$[p] \times [q]$.  However, by Proposition \ref{prop:poset} we can assume that the
support of $J$ is a chain.

The cost for the joint distribution, $J$, is minimized when the support is a chain.   Upon inspection, one sees that the value of $\mcE$ agrees with the cost in the case that the support of $J$ is on a chain.  The value of $J_{i,j}$ is multiply counted $|i-j|$ times -- once for each of the contributing terms of $\mcE$.  In the non-contributing terms there is a telescoping  inside the absolute value.

Theorem \ref{thm:L1} follows for chains, to which we have inductively reduced the problem.
\end{proof}

\subsection{Proof of Theorem \ref{thm:HS}}

\begin{proof}
The vector space of degree $s$ homogeneous polynomial functions on the rank at most one p-by-q matrices is denoted $\mcR_{p,q}^s$.  By Proposition \ref{prop:poset}, we obtain a basis for this space by considering the monomials
\[
    \prod_{i=1}^p \, \, \prod_{j=1}^q x_{ij}^{J_{ij}}
\]
where $J$ is a non-negative integer matrix with support on a chain.  The row and column sums of $J$ are a pair of compositions of $s$ with $p$ and $q$ parts respectively.  We denote these by $\mu$ and $\nu$.   Note that by Proposition \ref{prop:bijection}, $\mu$ and $\nu$ determine J.

If we assign each of these monomials the formal expression $z^{\cEMD(\mu,\nu)} t^s$ and sum them as formal series, we obtain the Hilbert series of $\mcR_{p,q}$,
\[
    \sum_{s=0}^\infty \left( \sum_{(u,v) \in \mcC(s,p) \times \mcC(s,q)}
                z^{\tiny \cEMD(u,v)} \right) t^s
\]
which we then recognize as the definition of $H_{p,q}(z,t)$.

Each monomial has a non-negative integer matrix $J$ as its exponents, with support on a chain.  This chain terminates at or before $x_{p,q}^{J_{p,q}}N$.  From Theorem \ref{thm:L1} the variable $x_{p,q}$ is multiplied by $z^{|p-q|} \, t$, and contributes
\[
    \sum_{J_{p,q} = 0}^\infty \left( z^{|p-q|} t \right)^{J_{p,q}}
\]
to all monomials.  The geometric series sums to $\displaystyle \frac{1}{1-z^{|p-q|} \, t}$.

The preceding variables in the monomial may contain $x_{p,j}$ for some $1 \leq j \leq q$, or $x_{i,q}$ for some $1\leq i \leq p$, but not both -- since the exponent matrix has support in a chain.  In the former case these monomials are in the sum
$H_{p,q-1}$, while in the latter are counted in $H_{p,q-1}$.

The sum $H_{p-1,q} + H_{p,q-1}$ over counts monomials.  That is to say, if a monomial has an exponent
with support involving variables $x_{i,j}$ with $i<p$ and $j<q$ then it is counted once in $H_{p-1,q}$ and once in $H_{p,q-1}$.  It also appears once in $H_{p-1,q-1}$.  We therefore observe that such monomials are counted exactly once in the expression
\[
    H_{p-1,q} + H_{p,q-1} - H_{p-1, q-1}.
\]

Finally, we see that all such monomials are counted exactly once in the product
\[ \frac{ H_{p-1,q} + H_{p,q-1} - H_{p-1,q-1} }{1- z^{|p-q|} t}  \]
if $(p,q) \neq (1,1)$ and $H_{1,1} = \frac{1}{1-t}$.
\end{proof}

\subsection{Proof of Theorem \ref{thm:main}}
\begin{proof}

Fix positive integers $p$ and $q$.  The coefficient of $t^s$ in
$\frac{1}{(1-t)^{p+q}}$ is
\[
    \binom{s+p+q-1}{p+q-1} = \frac{s^{p+q-1}}{(p+q-1)!} + \mbox{ lower order terms in }s.
\]

And, we have
\[
    N_{p,q}(t) = c_0 + c_1 t + c_2 t^2 + \cdots + c_k t^k
\] for some non-negative integers $c_0, \cdots, c_k$.
Thus the coefficient of $t^s$ in
\[
    \frac{N_{p,q}(t)}{(1-t)^{p+q}}
\]
is therefore asymptotic to
\[
    N_{p,q}(1) \frac{s^{p+q-1}}{(p+q-1)!}
\]

We will next find an inhomogeneous three term recursive formula for $N_{p,q}(1)$
from Equation \ref{eqn:N recursion}.

First we observe that $W_{p,q}(1) = \binom{p+q-2}{p-1}$.  Therefore, we have
\[
    N_{p,q}(1) = N_{p-1,q} + N_{p,q-1} + |p-q| \binom{p+q-2}{p-1}.
\]

Then we divide by $(p+q-1)!$ to obtain the asymptotic.  However, our goal is to obtain the expected value of $\cEMD$.
So, in light of Proposition \ref{prop:N}, we will need to divide by
\[
    \binom{s+p-1}{p-1} \binom{s+q-1}{q-1} \sim   \frac{s^{p+q-2}}{(p-1)!(q-1)!}.
\]

Thus, we find that the expected value is
\[
    \mcM_{p,q} = \frac{(p-1)! (q-1)!}{(p+q-1)!} N_{p,q}(1),
\]
which we can rewrite as
\[
    \mcM_{p,q} = \frac{(p-1)\mcM_{p-1,q} + (q-1)\mcM_{p,q-1} + |p-q|}{p+q-1}.
\]
\end{proof}

\section{Relation to spectral graph theory}\label{sec:Spectral}

We next turn our attention to some results from spectral graph theory and describe a connection with the expected value of the EMD.

The concept of a graph (or network) is likely familiar to the reader.   We recall the terminology briefly.
By a \emph{graph} we mean an ordered pair, $(V,E)$, where $V$ is a finite set whose elements are called \emph{vertices} and $E$ is a finite set whose elements are called \emph{edges}, together with an injective mapping from $E$ to unordered pairs of distinct vertices.  The elements of $E$ are said to \emph{join} the corresponding pair of vertices.  The number of vertices joined to a given vertex, $v \in V$, is called the degree of $v$, denoted $deg(v)$.

A sequence of distinct vertices, $v_1, v_2, \cdots, v_t$ with $v_i$ joined to $v_{i+1}$ for each $i$ is called a \emph{path}.   If a path exists between between all pairs of vertices then we say that the graph is \emph{connected}.

Given vertices $v$ and $w$ in a connected graph, the \emph{distance} between vertex $v$ and vertex $w$ is the length of the shortest path starting with $v$ and ending with $w$, and will be denoted $\rho(v,w)$.  The function $\rho$ is a metric on $V$.
For an integer $r$, the \emph{ball} of radius $r$ centered at $v \in V$ will be defined as
\[
    B_r(v) := \{ w \in V : \rho(v,w) \leq r \}.
\]
Furthermore, let $n_r(v) = |B_r(v)| - |B_{r-1}(v)|$, and set
\[
    S(v) := \sum_{r \geq 0} r n_r(v),
\] which is a finite sum giving the expected distance a vertex is from $v$.

As we shall see, this metric is related to the topic of this article.
Specifically, the \emph{mean distance} in a graph is defined to be:
\[
    \overline \rho(G) := \frac{1}{m(m-1)} \sum_{v \in V} S(v).
\] where $m = |V|$.

The above notation is from \cite{M2}, Section 3.  Observe that the definition is equivalent to averaging the the distance between all two-element subsets
of $V$.  We also point out that if we consider all ordered pairs of vertices we have:
\[
    \left( 1-\frac{1}{m} \right) \overline \rho(G) = \frac{1}{m^2} \sum_{(v,w) \in V \times V} \rho(v,w).
\]

From our point of view, we consider the graph to be on the vertex set $\mcC(s,n)$ where two
compositions are joined when the (unnormalized) Earth Mover's Distance is exactly 1.  We call this graph the \emph{Earth Mover's Graph}, denoted $G(s,n)$.  In this case the length of the shortest path between two vertices is the Earth Mover's Distance.

The average distance between ordered pairs of vertices in $G(s,n)$ is the subject of this article.  More generally, one can consider a graph $G$ whose vertices are a subset of $\mcC(s,n)$ and two vertices $v$ and $w$ are joined when $\cEMD(v,w) \leq t$
for some fixed constant $t \geq 0$.   We will call $t$ the \emph{threshold}.
In practice, determining values of the threshold that uncover features in the data is an
important research topic.   Understanding the expected EMD in the uniform case is only one line of research.

The connected components of $G$ are of interest in cluster analysis.  For example, the connected components of $G$ may be interpreted as clusters.
If $t=0$ there are no edges in $G$, thus no connected components.  For sufficiently large $t$, every pair of vertices is joined and there is only one component.  As $t$ decreases the graph disconnects.   Hierarchical connection of components defined by $t$ gives rise to many different types of clustering.  An example of this type of analysis will be given in the next section.

There are several ``off the shelf'' methods for clustering analysis.  See \cite{DM} Chapter 20 for some commentary, especially about the popular ``k-means'' algorithm.  Here we present only \emph{metric hierarchical clustering} and \emph{spectral clustering} as they relate to Theorem \ref{thm:main}.  However, the data that we consider in this article can and should be looked at from several points of view.  In particular, unsupervised machine learning techniques are merited.  See \cite{HTF} as a general reference.

The average distance of a graph is also related to other invariants; we recommend the survey \cite{M1}.  First we recall some additional terminology.  Given a graph $G$ with vertices $\{ v_1, \cdots, v_m\}$ and $k$ edges, one can form the Laplacian matrix $L_G = D_G - A_G$ where $D_G$ is the diagonal matrix with the degree of vertex $v_i$ in the i-th row and i-th column, while $A(G)$ is the adjacency matrix in which the entry in row i and column j is a one if $v_i$ and $v_j$ are joined by an edge, and zero otherwise.

The spectrum of $L_G$ is of interest.  To begin, $L_G$ is a positive semidefinite matrix.  The multiplicity of the 0-eigenspace is equal to the number of connected components of $G$.
If the spectrum of $L_G$ is denoted by $0 = \la_1 \leq \la_2 \leq \cdots \leq \la_m$,
the \emph{algebraic connectivity} is given by $\la_2$.  Intuitively, we expect ``clustering'' when $\la_2$ is small relative to the rest of the spectrum.

Related to the algebraic connectivity are inequalities proved in \cite{M2}.   We recall them here because they partially describe the structure of the graph based on $\la_2$.  In fact we can use $\la_2$ to calculate bounds on the average distance between vertices in a graph and another invariant to be defined next.

The discrete Cheeger inequality asserts
that the \emph{isoperimetric number}, $i(G)$, is closely related to the spectrum of a graph.  This number is defined as
\[
    i(G) := \min \left\{ \frac{|\delta X|}{|X|} : X \subseteq V(G) \mbox{s.t.} 0 < |X| < \frac{1}{2}|V(G)| \right\}
\] where $\delta(X)$ is defined to be the boundary of a set of vertices $X$ (that is $v \in X$ iff $v$ is in $X$ but is joined to a vertex not in $X$).
One result from \cite{M2} is
\begin{equation}\label{eq:isop}
    \frac{\la_2}{2} \leq i(G) \leq \sqrt{\la_2(2 d_{max}  - \la_2)}
\end{equation}
where $d_{max}$ is the maximum degree of a vertex in $G$.  These results have their underpinnings in geometry and topology, see \cite{Mgw}, for example.  Intuitively, the point here is that if $G$ has two large subgraphs that are joined only by a small set of edges then $i(G)$ is small.  Unfortunately, computing $i(G)$ exactly is difficult.  However, the spectrum of $G$ can be computed more easily, providing the stated bounds for $i(G)$.

A third invariant for $G$ is $\overline \rho(G)$, the mean distance.  This value can also be bounded.  The inequality presented in \cite{M2} is
\begin{equation}\label{eq:inequality}
    \frac{1}{m-1}\left(\frac{2}{\la_2} +\frac{m-2}{2}\right)
    \leq \overline \rho(G) \leq
    \frac{m}{m-1}\left[ \frac{d_{max} - \la_2}{4 \la_2} \ln(m-1) \right]
\end{equation}
where $m$ is the number of vertices in $G$.

\section{Real world data}\label{sec:Real world data}

In this section we consider a real world data set coming from the Section Attrition and Grade Report published by the Office of Assessment and Institutional Research at the University of Wisconsin - Milwaukee for Fall semesters of academic years 2013-2017.  We selected data for courses with enrollments greater than 1,000.  Analysis is done for the 12 grades A through F (with plus/minus grading). ``W" grades were not reported by the University for the entire period and are not included.

Most universities collect and analyze similar types of retention and attrition data.  Analysis of grade distribution data may be an as-yet untapped portal for self-examination that reveals patterns in large, multi-section courses, or allows for insight into cross-divisional course boundaries.  Cluster analysis provides a visual reinforcement of the calculated EMD.

The data for 2013-2017 comprises a total of twenty one courses.  In Fall 2013 there are five courses: English 101 and 102, Math 095 and 105, and Psychology 101.  Math 095 was redesigned after Fall 2013 and enrollment dropped below 1,000; the other four courses were offered every year.  Input data are sorted by division and year.

\[
\begin{array}{c||cccc}
 ID & Division & Course & Year & Enrollment \\ \hline
1 & \text{English} & \text{101} & \text{2013} & 1800 \\
 2 & \text{English} & \text{102} & \text{2013}  &1224\\
 3 & \text{English} & \text{101} & \text{2014}  &1762\\
 4 & \text{English} & \text{102} & \text{2014} &1299 \\
 5 & \text{English} & \text{101} & \text{2015}  &1742 \\
 6 & \text{English} & \text{102} & \text{2015} &1525 \\
 7 & \text{English} & \text{101} & \text{2016} &1693\\
 8 & \text{English} & \text{102} & \text{2016}  &1410 \\
 9 & \text{English} & \text{101} & \text{2017}  &1569\\
 10 & \text{English} & \text{102} & \text{2017} &1142 \\
 11 & \text{Math} & \text{95} & \text{2013} &1166  \\
 12 & \text{Math} & \text{105} & \text{2013}  &1555 \\
 13 & \text{Math} & \text{105} & \text{2014}   &1701\\
 14 & \text{Math} & \text{105} & \text{2015}  &1466\\
 15 & \text{Math} & \text{105} & \text{2016}  &1604\\
 16 & \text{Math} & \text{105} & \text{2017}  &1732\\
 17 & \text{Psychology} & \text{101} & \text{2013} &1507  \\
 18 & \text{Psychology} & \text{101} & \text{2014}&1443\\
 19 & \text{Psychology} & \text{101} & \text{2015} &1337 \\
 20 & \text{Psychology} & \text{101} & \text{2016} &1192  \\
 21 & \text{Psychology} & \text{101} & \text{2017}  &1333 \\
\end{array}
\]

\subsection{Histogram of EMD sample}
We form the Earth Mover's Graph by computing the unit normalized EMD for each pair of the 21 courses.  A histogram of the results is presented in Figure \ref{fig:hist_uwm}.  The rough structure reflects some aspects of the distribution of the theoretical case for 30 students and 5 grades depicted in Figure \ref{fig:hist_30n5}.  For example, it is almost unimodal and skewed to the right.  It is also interesting to note that the histogram in Figure \ref{fig:hist_uwm} shows maximal EMD between courses at around 0.25.  Additionally, for each course one can compute the average EMD to all others.  This course pair-wise average
has a minimum EMD of 0.067, mean 0.086, and maximum 0.129.

\begin{figure}
\includegraphics{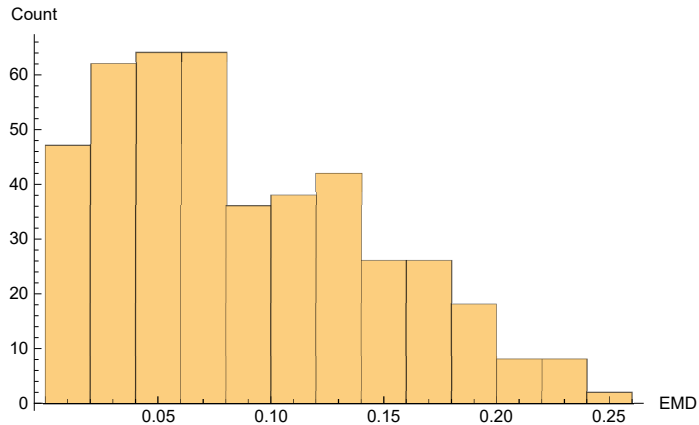}
\caption{Histogram of EMD for UWM Grade Data}
\label{fig:hist_uwm}
\end{figure}

To better understand the relative sizes of these numbers, we can compare them with the mean in the uniform model with $n=12$, which is approximately 0.1300 when $s=1000$ and then drops to 0.1293 when $s \rightarrow \infty$.  For actual grade distribution data, not all grade distributions are equally likely.  Nonetheless, for this data set the maximum mean EMD corresponds to Psychology 101, Fall 2014 and is surprisingly close to the theoretical value.  No doubt this is an anomaly, but we were surprised by how large the unit normalized EMD was between courses when compared to uniform sampling.

\subsection{Hierarchical Clustering}
Using the unit normalized EMD, we next determine a threshold, $t$, such that two courses are joined by an edge when the EMD falls below $t$.  Thus, we obtain a family of graphs parameterized by $t$.   We consider the connected component structure for each value of $t$.  In practice, one sees a
single giant component when the threshold is ``large''.   As a heuristic, we consider large to mean
greater than the expected distance in the uniform model.

\begin{figure}
\includegraphics{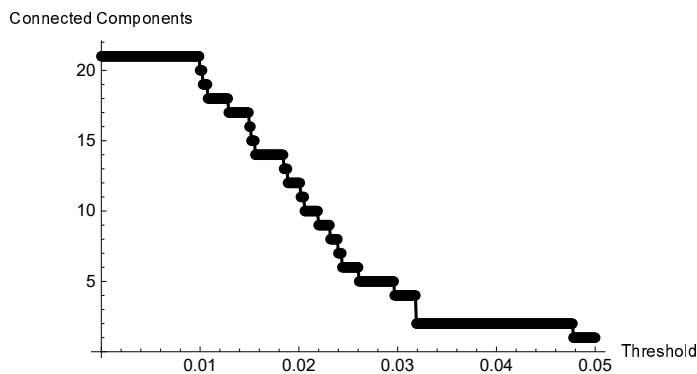}
\caption{Number of Connected Components with change in Threshold}
\label{fig:con_comp}
\end{figure}

In Figure \ref{fig:con_comp} we step through various distance threshold values $t$ and count the number of connected components. When $t$ is very small, the graph has 21 components.  As $t$ increases, the components of the graph continue to connect, with the largest range of persistence for $t \in [0.0319,0.0477]$ with two components.
It is useful to compare these values to the expected EMD in the uniform model.
The endpoints of this interval are 25\% and 37\% of the uniform mean, respectively.

Set $t=0.0478$. We form a graph on the vertex set of these 21 courses.   Two courses are joined by an edge when the unit normalized distance between them falls below $t$.  Since $t$ is larger than 0.0477, there is only one connected component as depicted in Figure \ref{fig:CC}.

\begin{figure}
\includegraphics{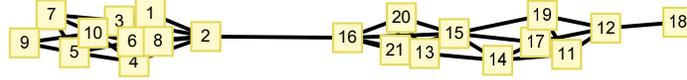}
\caption{Connected Components}
\label{fig:CC}
\end{figure}

\subsection{Spectral Analysis}
A key component of spectral clustering analysis is the construction of the Laplacian Matrix $L_G$ as defined in Section \ref{sec:Spectral}.  The second smallest eigenvalue of $L_G$ gives the algebraic connectivity.  Figure \ref{fig:spectrum} presents a plot of the full spectrum for $t =0.0478$.
\begin{figure}
\includegraphics{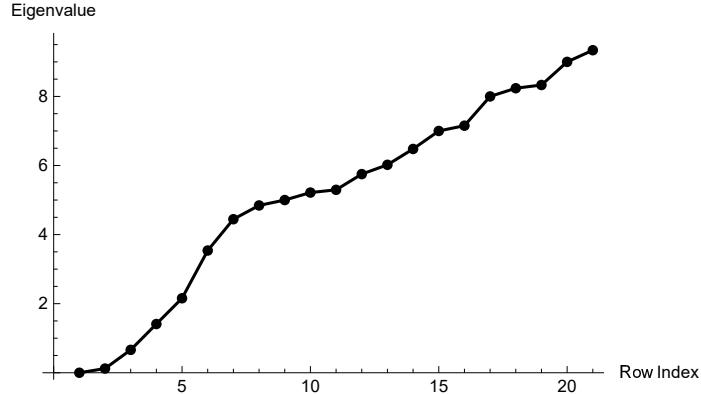}
\caption{Eigenvalues for $t=0.0478$}
\label{fig:spectrum}
\end{figure}

For the 21 vertex graph in Figure \ref{fig:CC}, the algebraic connectivity is $\la_2 \cong 0.1213$, and the maximum degree is $d_{max} = 8$.  It appears as if elements $2$ and $16$ create a tenuous bridge between two ``clusters''.

The theoretical bounds on the isoperimetric number from Equation \ref{eq:isop} are
\[
    \frac{0.1213}{2} \leq i(G) \leq \sqrt{0.1213(2 \times 8  - 0.1213)}
\]
\[
    0.06065 \leq i(G) \leq 1.3878
\]
with a computed value of $i(G) \cong 0.1$ for this data set.  The relatively small isoperimetric number is consistent with the single edge of connection between elements $2$ and $16$ in Figure \ref{fig:CC}.

The theoretical bounds on the mean distance $\overline \rho(G)$ from Equation \ref{eq:inequality} are
\[
    \frac{1}{21-1}\left(\frac{2}{0.1213} +\frac{21-2}{2}\right)
    \leq \overline \rho(G) \leq
	    \frac{21}{21-1}\left[ \frac{ 8 - 0.1213}{4 \times 0.1213} \ln(21-1) \right]
\]
\[
    1.2995  \leq \overline \rho(G) \leq 51.08
\]
and one can compute $\overline \rho(G) \cong 2.910$ for this data set.

The most curious part of this analysis appears in the plot of the components.
At threshold $t = 0.0477$ the algebraic connectivity (i.e. $\la_2$) goes to zero,
and the EMD separates all of the English courses from the cluster of Math and Psychology courses.  More specifically,
EMD splits English (Course ID's $1$ through $10$) off from Math (ID's $11$ - $16$) and Psychology (ID's $17$-$21$).  Or equivalently, grade distributions in English courses are most similar to grade distributions in other English courses, and least similar to grade distributions in both Math and Psychology courses.

\begin{figure}
\includegraphics{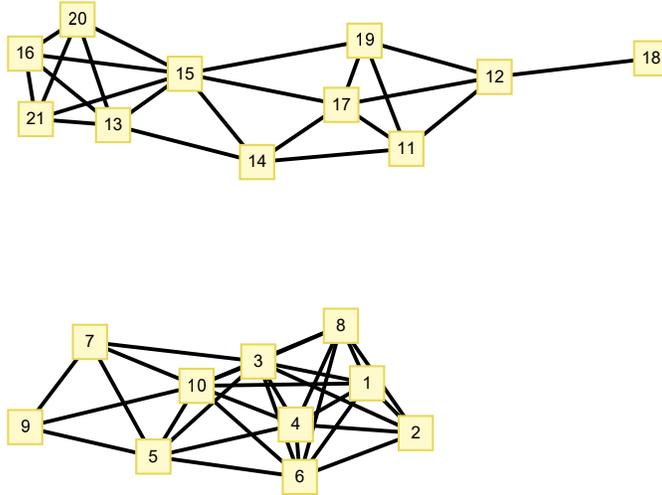}
\caption{Partitioned Components}
\label{fig:PC}
\end{figure}

Although we see this clear partition of the data in Figure \ref{fig:PC}, one needs to be cautious about clustering algorithms.
For example, clustering identified in one algorithm may not be the same as another.
See the paper \cite{K} for a careful treatment of this topic in general.   In our more specific setting the clustering is determined by the threshold $t$.  Thus, the question of where to set this value is delicate:  too small and we see too many components, while too large we see very little clustering at all.  Furthermore, our intention is only for exploratory data analysis and not to
suggest policy regarding instructional assessment.

\begin{bibdiv}
\begin{biblist}
\bib{FC}{book}{
   author={Chung, Fan R. K.},
   title={Spectral graph theory},
   series={CBMS Regional Conference Series in Mathematics},
   volume={92},
   publisher={Published for the Conference Board of the Mathematical
   Sciences, Washington, DC; by the American Mathematical Society,
   Providence, RI},
   date={1997},
   pages={xii+207},
   isbn={0-8218-0315-8},
   review={\MR{1421568}},
}
\bib{CLO}{book}{
   author={Cox, David},
   author={Little, John},
   author={O'Shea, Donal},
   title={Using algebraic geometry},
   series={Graduate Texts in Mathematics},
   volume={185},
   publisher={Springer-Verlag, New York},
   date={1998},
   pages={xii+499},
   isbn={0-387-98487-9},
   isbn={0-387-98492-5},
   review={\MR{1639811}},
   doi={10.1007/978-1-4757-6911-1},
}
\bib{DK14}{article}{
   author={De Loera, Jes\'{u}s A.},
   author={Kim, Edward D.},
   title={Combinatorics and geometry of transportation polytopes: an update},
   conference={
      title={Discrete geometry and algebraic combinatorics},
   },
   book={
      series={Contemp. Math.},
      volume={625},
      publisher={Amer. Math. Soc., Providence, RI},
   },
   date={2014},
   pages={37--76},
   review={\MR{3289405}},
   doi={10.1090/conm/625/12491},
}
\bib{EHP}{article}{
   author={Enright, Thomas J.},
   author={Hunziker, Markus},
   author={Pruett, W. Andrew},
   title={Diagrams of Hermitian type, highest weight modules, and syzygies
   of determinantal varieties},
   conference={
      title={Symmetry: representation theory and its applications},
   },
   book={
      series={Progr. Math.},
      volume={257},
      publisher={Birkh\"{a}user/Springer, New York},
   },
   date={2014},
   pages={121--184},
   review={\MR{3363009}}
}
\bib{EW}{article}{
   author={Enright, Thomas J.},
   author={Willenbring, Jeb F.},
   title={Hilbert series, Howe duality, and branching rules},
   journal={Proc. Natl. Acad. Sci. USA},
   volume={100},
   date={2003},
   number={2},
   pages={434--437},
   issn={1091-6490},
   review={\MR{1950645}},
   doi={10.1073/pnas.0136632100},
}
\bib{F}{book}{
   author={Fulton, William},
   title={Young tableaux},
   series={London Mathematical Society Student Texts},
   volume={35},
   note={With applications to representation theory and geometry},
   publisher={Cambridge University Press, Cambridge},
   date={1997},
   pages={x+260},
   isbn={0-521-56144-2},
   isbn={0-521-56724-6},
   review={\MR{1464693}},
}
\bib{GW}{book}{
   author={Goodman, Roe},
   author={Wallach, Nolan R.},
   title={Symmetry, representations, and invariants},
   series={Graduate Texts in Mathematics},
   volume={255},
   publisher={Springer},
   place={Dordrecht},
   date={2009},
   pages={xx+716},
   isbn={978-0-387-79851-6},
   review={\MR{2522486}},
   doi={10.1007/978-0-387-79852-3},
}
\bib{HTF}{book}{
   author={Hastie, Trevor},
   author={Tibshirani, Robert},
   author={Friedman, Jerome},
   title={The elements of statistical learning},
   series={Springer Series in Statistics},
   note={Data mining, inference, and prediction},
   publisher={Springer-Verlag, New York},
   date={2001},
   pages={xvi+533},
   isbn={0-387-95284-5},
   review={\MR{1851606}},
   doi={10.1007/978-0-387-21606-5},
}
\bib{K}{article}{
   author={Kleinberg, Jon M.},
   title={An Impossibility Theorem for Clustering},
   conference={
      title={Advances in Neural Information Processing Systems},
   },
   book={
      series={NIPS},
      volume={15},
    editor = {S. Becker and S. Thrun and K. Obermayer},
    publisher = {MIT Press, Cambridge, MA},
   },
   date={2003},
   pages={463--470},
    url = {http://papers.nips.cc/paper/2340-an-impossibility-theorem-for-clustering.pdf}
}
\bib{DM}{book}{
   author={MacKay, David J. C.},
   title={Information theory, inference and learning algorithms},
   publisher={Cambridge University Press, New York},
   date={2003},
   pages={xii+628},
   isbn={0-521-64298-1},
   review={\MR{2012999}},
}
\bib{Mgw}{article}{
   author={M\'{e}moli, Facundo},
   title={A spectral notion of Gromov-Wasserstein distance and related
   methods},
   journal={Appl. Comput. Harmon. Anal.},
   volume={30},
   date={2011},
   number={3},
   pages={363--401},
   issn={1063-5203},
   review={\MR{2784570}},
   doi={10.1016/j.acha.2010.09.005},
}
\bib{M1}{article}{
   author={Mohar, Bojan},
   title={The Laplacian spectrum of graphs},
   conference={
      title={Graph theory, combinatorics, and applications. Vol. 2},
      address={Kalamazoo, MI},
      date={1988},
   },
   book={
      series={Wiley-Intersci. Publ.},
      publisher={Wiley, New York},
   },
   date={1991},
   pages={871--898},
   review={\MR{1170831}},
}
\bib{M2}{article}{
   author={Mohar, Bojan},
   title={Eigenvalues, diameter, and mean distance in graphs},
   journal={Graphs Combin.},
   volume={7},
   date={1991},
   number={1},
   pages={53--64},
   issn={0911-0119},
   review={\MR{1105467}},
   doi={10.1007/BF01789463},
}
\bib{PSU19}{article}{
   author={Perrone, Elisa},
   author={Solus, Liam},
   author={Uhler, Caroline},
   title={Geometry of discrete copulas},
   journal={J. Multivariate Anal.},
   volume={172},
   date={2019},
   pages={162--179},
   issn={0047-259X},
   review={\MR{3958193}},
   doi={10.1016/j.jmva.2019.01.014},
}

\end{biblist}
\end{bibdiv}

\end{document}